\numberwithin{equation}{section}
\numberwithin{figure}{section}
\theoremstyle{plain}
\newtheorem{thm}{\protect\theoremname}[section]
\theoremstyle{plain}
\newtheorem{conjecture}[thm]{\protect\conjecturename}
\theoremstyle{plain}
\newtheorem{cor}[thm]{\protect\corollaryname}
\theoremstyle{plain}
\newtheorem{lem}[thm]{\protect\lemmaname}
\theoremstyle{remark}
\newtheorem{rem}[thm]{\protect\remarkname}
\theoremstyle{plain}
\newtheorem{prop}[thm]{\protect\propositionname}
\def\makebbb#1{
    \expandafter\gdef\csname#1\endcsname{
        \ensuremath{\Bbb{#1}}}
}\makebbb{R}\makebbb{N}\makebbb{Z}\makebbb{C}\makebbb{H}\makebbb{E}\makebbb{H}\makebbb{P}\makebbb{B}\makebbb{Q}\makebbb{E}\makebbb{E}
\providecommand{\conjecturename}{Conjecture}
\providecommand{\corollaryname}{Corollary}
\providecommand{\lemmaname}{Lemma}
\providecommand{\propositionname}{Proposition}
\providecommand{\remarkname}{Remark}
\providecommand{\theoremname}{Theorem}
\begin{document}
\title{The probabilistic vs the quantization approach to Kähler-Einstein
geometry }
\author{Robert J. Berman}
\begin{abstract}
In the probabilistic construction of Kähler-Einstein metrics on a
complex projective algebraic manifold $X$ - involving random point
processes on $X$ - a key role is played by the partition function.
In this work a new quantitative bound on the partition function is
obtained. It yields, in particular, a new direct analytic proof that
$X$ admits a Kähler-Einstein metrics if it is uniformly Gibbs stable.
The proof makes contact with the quantization approach to Kähler-Einstein
geometry.
\end{abstract}

\maketitle

\section{\label{sec:Introduction}Introduction}

A complex projective algebraic manifold $X$ admits a Kähler-Einstein
metric with positive Ricci curvature if and only if $X$ is a Fano
manifold satisfying an algebro-geometric condition called K-stability;
this is the content of the solution of the Yau-Tian-Donaldson (YTD)
conjecture for Fano manifolds \cite{c-d-s}. The proof in \cite{c-d-s}
is based on a variant of Aubin's method of continuity \cite{au},
extended to Aubin's original method in \cite{d-s}. It involves the
following equations for a Kähler metric $\omega_{t},$ parameterized
by ``time'' $t:$
\begin{equation}
\mbox{\ensuremath{\mbox{Ric}}\ensuremath{\,\omega_{t}}}=t\omega_{t}+(1-t)\ensuremath{\mbox{Ric}\,}dV,\label{eq:Aubin eq}
\end{equation}
where $dV$ is a fixed a volume form $dV$ on $X,$ which may be taken
to have positive Ricci curvature $\ensuremath{\mbox{Ric}\,}dV$ (since
$X$ is Fano). The supremum over all $t\in[0,1]$ for which a solution
$\omega_{t}$ exists defines an invariant of $X,$ denoted by $R(X),$
which is strictly positive \cite{sz}. As $t$ is increased towards
$R(X)$ either $\omega_{t}$ blows-up or it converges towards a Kähler-Einstein
metric (in which case $R(X)=1).$ The first alternative is precisely
what it is shown to be excluded by the condition of K-stability \cite{d-s}.
While it is usually assumed that $t\in[0,1]$ it will in the present
work be important to allow $t$ to be any real number.

A probabilistic construction of Kähler-Einstein metrics with negative
Ricci curvature was introduced in \cite{berm8}, where the Kähler-Einstein
metric emerges from a random point process on $X$ with $N$ points
as $N$ tends to infinity (see also \cite{f-k-z} for a different
probabilistic framework involving random $N\times N$ Hermitian matrices,
also inspired by the YTD conjecture). A conjectural extension to Kähler-Einstein
metrics with positive Ricci curvature was proposed in \cite{berm8 comma 5}
and conditional convergence results were given in \cite{berm11,berm12}.
In this probabilistic approach the role of K-stability is played by
a new type of stability, dubbed Gibbs stability, which amounts to
the finiteness of the corresponding partition functions. In the survey
\cite{berm11b} connections to the variational proof of the uniform
YTD conjecture \cite{bbj} (involving uniform K-stability) are explained,
including non-Archimedean aspects. In the present paper a new quantitative
lower bound on the partition functions is obtained, which  yields
a new direct analytic proof that uniform Gibbs stability implies the
existence of a unique Kähler-Einstein metric on $X.$ The proof makes
contact with the quantization approach to Kähler geometry and, in
particular, with K.Zhang's new remarkably direct proof of the (uniform)
YTD conjecture \cite{zh}. 

\subsection{Background on the probabilistic approach}

Let $X$ be a Fano manifold. Given a positive integer $k$ we denote
by $N$ the dimension of the space of all holomorphic sections of
the $k$th tensor power of the anti-canonical line bundle $-K_{X}$
(i.e. the top exterior power of the tangent bundle of $X)$: 
\[
N:=\dim H^{0}(X,-kK_{X})
\]
(using additive notation for tensor products of line bundles). The
Fano assumption on $X$ ensures, in particular, that $N\rightarrow\infty,$
as $k\rightarrow\infty$ (more precisely, $N\sim k^{\dim X}$). Given
a basis $s_{1}^{(k)},...,s_{N}^{(k)}$ in $H^{0}(X,-kK_{X})$ denote
by $\det S^{(k)}$ the corresponding holomorphic section of the line
bundle $-(kK_{X^{N}})\rightarrow X^{N}$ defined as the Slater determinant
\begin{equation}
(\det S^{(k)})(x_{1},x_{2},...,x_{N}):=\det\left(s_{i}^{(k)}(x_{j})\right).\label{eq:slater determinant}
\end{equation}
Given a volume form $dV$ on $X$ and a parameter $\beta>0,$ the
$N-$fold product $X^{N}$ is endowed with the following probability
measure:

\[
\mu_{\beta}^{(N)}:=\frac{\left\Vert \det S^{(k)}\right\Vert ^{2\beta/k}dV^{\otimes N}}{\mathcal{Z}_{N}(\beta)},\,\,\,\mathcal{Z}_{N}(\beta):=\int_{X^{N_{k}}}\left\Vert \det S^{(k)}\right\Vert ^{2\beta/k}dV^{\otimes N}
\]
where $\left\Vert \cdot\right\Vert $ denotes the metric on $-K_{X}$
(and its tensor powers) induced by $dV.$ In statistical mechanical
terms this probability measure represents the equilibrium distribution
of $N$ interacting particles on $X$ at inverse temperature $\beta$
and $\mathcal{Z}_{N_{k}}(\beta)$ is the corresponding partition function.
The probability measure $\mu_{\beta}^{(N)}$ is, in fact, independent
of the choice of bases. It will be convenient to fix a reference volume
form $dV_{X}$ on $X$ with positive Ricci curvature and a basis $(s_{i}^{(k)})$
in $H^{0}(X,-kK_{X})$ which is orthonormal with respect to Hermitian
product on $H^{0}(X,-kK_{X})$ induced by $dV_{X}.$ 

The probability measure $\mu_{\beta}^{(N)}$ is symmetric (since the
determinant is anti-symmetric) and thus defines a random point process
on $X$ with $N$ points $x_{1},...,x_{N}.$ By \cite[Thm 5.7]{berm8}
the corresponding \emph{empirical measure} $\delta_{N},$ i.e. the
discrete measure on $X$ defined by 
\begin{equation}
\delta_{N}:=\frac{1}{N}\sum_{i=1}^{N}\delta_{x_{i}},\label{eq:def of empirical measure}
\end{equation}
converges in probability, as $N\rightarrow\infty,$ towards a normalized
volume form $dV_{\beta}$ on $X$ with the property that the Kähler
form 
\begin{equation}
\omega_{\beta}:=\frac{1}{\beta}\frac{i}{2\pi}\partial\bar{\partial}\log dV_{\beta}\label{eq:def of omega beta}
\end{equation}
is the unique solution to Aubin's continuity equation \ref{eq:Aubin eq}
with $t:=-\beta.$ The convergence of $\delta_{N}$ towards $dV_{\beta}$
also implies that the following convergence holds in the weak topology
of currents on $X:$ 
\[
\omega_{k,\beta}:=\frac{1}{\beta}\frac{i}{2\pi}\partial\bar{\partial}\left(\log\int_{X^{N-1}}\left\Vert \det S^{(k)}\right\Vert ^{2\beta/k}dV^{\otimes N-1}\right)\rightarrow\omega_{\beta},\,\,\,k\rightarrow\infty,
\]
 where $\omega_{k,\beta}$ is a Kähler form, for $k$ sufficiently
large (to ensure that $-kK_{X}$ is very ample). 

In fact, the convergence of $\delta_{N}$ towards $dV_{\beta}$ was
shown to hold at an exponential speed in the sense of Large deviation
theory \cite{d-z}. More precisely, a Large Deviation Principle (LDP)
was established, which may be symbolically expressed as 
\begin{equation}
(\delta_{N})_{*}\left(\left\Vert \det S^{(k)}\right\Vert ^{2\beta/k}dV^{\otimes N}\right)\sim e^{-NF_{\beta}(\mu)},\,\,\,N\rightarrow\infty,\label{eq:LDP}
\end{equation}
 where the left hand side defines a measure on the space of all probability
measures $\mathcal{P}(X)$ on $X$ and $F_{\beta}(\mu)$ is a free
energy type functional on $\mathcal{P}(X)$ (see formula \ref{eq:def of free en}).
Expressing $\mu$ as the normalized volume form of a Kähler metric
$\omega$ in the space $\mathcal{H}$ of all Kähler metrics representing
the first Chern class of $X,$ the free energy functional $F_{\beta}(\mu)$
gets identified with the twisted Mabuchi functional on $\mathcal{H}$
(which is minimized precisely by the unique Kähler metrics $\omega_{\beta}$
solving Aubin's equation \ref{eq:Aubin eq} with $t=-\beta):$ 
\[
F_{\beta}(\frac{\omega^{n}}{V})=\mathcal{M}_{\beta}(\omega)
\]
 (cf. formula \ref{eq:M as free energy}). The LDP \ref{eq:LDP} thus
implies that 
\begin{equation}
\lim_{N\rightarrow\infty}-\frac{1}{N}\log\mathcal{Z}_{N}(\beta)=\inf_{\mathcal{H}}\mathcal{M}_{\beta}\label{eq:lim Z N is inf Mab intro}
\end{equation}

\subsubsection{The case $\beta<0$ }

In the case when $\beta$ is negative the probability measure $\mu_{\beta}^{(N)}$
is well-defined for $\beta$ sufficiently close to $0.$ The main
case of interest is when $\beta=-1.$ In this case the measure $\mu_{\beta}^{(N)}$
is canonically attached to $X$, i.e. it is independent of the choice
of volume form $dV$ (since the contributions from the metric $\left\Vert \cdot\right\Vert $
on $-K_{X}$ and the volume form $dV$ on $X$ cancel). Hence, if
$\mu_{-1}^{(N)}$ is well-defined when $N$ is sufficiently large,
i.e. if $\mathcal{Z}_{N}(-1)<\infty$ - in which case $X$ is called
\emph{Gibbs stable} - one obtains canonical random point processes
on $X$ with $N$ points. It was conjectured in \cite{berm8 comma 5}
that the corresponding empirical measures $\delta_{N}$ converge towards
a unique Kähler-Einstein metric on $X$ \cite{berm8 comma 5}, as
$N\rightarrow\infty.$ A conjectural extension of the LDP for positive
$\beta$ in formula \ref{eq:LDP} to any negative $\beta$ was also
put forth in \cite{berm8 comma 5}. In a weaker form this conjecture
may be formulated as follows: 
\begin{conjecture}
\label{conj:Let--be}Let $X$ be a Fano manifold endowed with a volume
form $dV.$ Given a negative number $\beta_{0}$ the following is
equivalent:
\begin{enumerate}
\item For any given $\beta>\beta_{0}$ the partition function $\mathcal{Z}_{N}(\beta)$
is finite, when $N$ is sufficiently large.
\item For any given $\beta>\beta_{0}$ the twisted Mabuchi functional $\mathcal{M}_{\beta}$
admits a minimizer in $\mathcal{H}.$ 
\end{enumerate}
Moreover, if $\beta_{0}$ satisfies the first condition, then for
any given $\beta>\beta_{0}$ the empirical measure $\delta_{N}$ of
the ensemble $\left(X^{N},\mu_{\beta}^{(N)}\right)$ converges in
probability as $N\rightarrow\infty$ - after perhaps passing to a
subsequence - towards a volume form $dV_{\beta}$ such that the corresponding
Kähler metric $\omega_{\beta}$ (formula \ref{eq:def of omega beta})
minimizes $\mathcal{M}_{\beta}$ on $\mathcal{H}.$ 
\end{conjecture}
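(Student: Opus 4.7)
My plan is to reduce the conjecture to the asymptotic identity
\[
\lim_{N\to\infty} -\frac{1}{N}\log \mathcal{Z}_{N}(\beta) \;=\; \inf_{\mathcal{H}} \mathcal{M}_{\beta},
\]
already recorded in (\ref{eq:lim Z N is inf Mab intro}) for $\beta>0$, extended to all $\beta\in(\beta_{0},0)$. Granting this identity, the equivalence $(1)\Leftrightarrow(2)$ comes almost for free: finiteness of $\mathcal{Z}_{N}(\beta)$ for $N$ large forces $\inf_{\mathcal{H}}\mathcal{M}_{\beta}>-\infty$ on an open range of $\beta$, and the standard coercivity of the twisted Mabuchi functional (whose twist comes from the strictly positive form $\mathrm{Ric}\,dV_{X}$) then produces a genuine minimizer by the direct method of the calculus of variations. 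A preliminary H\"older estimate on $X^{N}$ shows that finiteness of $\mathcal{Z}_{N}(\beta_{0})$ automatically propagates to every $\beta\in(\beta_{0},0)$: for such $\beta$ the exponent $-2\beta/k$ is smaller, so the singularity of $\|\det S^{(k)}\|^{2\beta/k}$ along the vanishing divisor of $\det S^{(k)}$ is tamer.

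\textbf{Matching asymptotics.} The easy inequality $-\frac{1}{N}\log \mathcal{Z}_{N}(\beta) \le \mathcal{M}_{\beta}(\omega) + o(1)$, for any $\omega\in\mathcal{H}$, is obtained by Jensen's inequality applied to the product probability measure $(\omega^{n}/V)^{\otimes N}$, using Bergman-kernel asymptotics to identify the averaged $\frac{1}{N}\log\|\det S^{(k)}\|^{2/k}$ with the Aubin-Mabuchi energy of $\omega$. The reverse inequality is the hard direction: one must dominate $\frac{1}{N}\log\|\det S^{(k)}\|^{2/k}(x_{1},\dots,x_{N})$ by a quantized Mabuchi-type functional evaluated on the empirical measure $\delta_{N}$ from (\ref{eq:def of empirical measure}), and then apply a quantized energy-entropy inequality in the spirit of K.~Zhang \cite{zh}; the new quantitative partition-function bound announced in the abstract should play precisely this role.

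\textbf{Convergence of empirical measures.} Once the asymptotic identity is in place, the convergence of $\delta_{N}$ towards a minimizer of $\mathcal{M}_{\beta}$ (up to a subsequence) follows by standard LDP bookkeeping: the exponential upper bound derived above gives exponential tightness on $\mathcal{P}(X)$, and any accumulation point $\mu_{\infty}$ of $(\delta_{N})_{*}\mu^{(N)}_{\beta}$ must satisfy $F_{\beta}(\mu_{\infty})=\inf F_{\beta}$, so that $\mu_{\infty}$ is the normalized volume form of a minimizing K\"ahler form in $\mathcal{H}$.

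The principal obstacle is the lower bound for $-\frac{1}{N}\log\mathcal{Z}_{N}(\beta)$ in the negative-$\beta$ regime. Since the integrand blows up along the vanishing locus of $\det S^{(k)}$, the LDP machinery available for $\beta>0$ does not apply directly, and the convexity/subadditivity tricks that were available in the positive case are also lost. The quantitative quantization bound has to be sharp enough that the discrepancy between $-\frac{1}{N}\log\mathcal{Z}_{N}(\beta)$ and its quantized counterpart is $o(1)$ uniformly in $\beta$ on compact subsets of $(\beta_{0},0)$; this uniformity is precisely what permits the passage from asymptotic finiteness of the partition function to the existence of a genuine minimizer, and from there to the extraction of convergent subsequences of $\delta_{N}$.
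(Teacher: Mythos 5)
This statement is labeled a \emph{Conjecture} by the paper, and the paper does not prove it. What the paper actually proves is the one-sided implication $(1)\Rightarrow(2)$ (Corollary \ref{cor:intro}), as a consequence of the quantitative bound in Theorem \ref{thm:Main intro}; the converse implication $(2)\Rightarrow(1)$ and the convergence of $\delta_{N}$ are explicitly left open (see the discussion of the ``missing lower bound on $\gamma(X)$'' in the outlook section). Your reduction of the conjecture to the two-sided asymptotic identity $\lim_{N}-\frac{1}{N}\log\mathcal{Z}_{N}(\beta)=\inf_{\mathcal{H}}\mathcal{M}_{\beta}$ for $\beta<0$ is the right one, and it agrees with what the paper itself says (citing \cite[Section 7]{berm11} and \cite[Thm 2.3]{berm12}) suffices for the convergence statement. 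You are also right that the remaining obstacle is the lower bound $\liminf_{N}-\frac{1}{N}\log\mathcal{Z}_{N}(\beta)\geq\inf_{\mathcal{H}}\mathcal{M}_{\beta}$, which is genuinely open.

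However, you have inverted the role of the paper's quantitative estimate. Theorem \ref{thm:Main intro} gives
\[
-\frac{1}{N}\log\mathcal{Z}_{N}(-\gamma)\leq\frac{k+\gamma}{k+1}\inf_{\mathcal{H}}\mathcal{M}_{-\gamma c_{k}}+O(k^{-1}),
\]
which is an \emph{upper} bound on $-\frac{1}{N}\log\mathcal{Z}_{N}$, i.e.\ a \emph{lower} bound on $\mathcal{Z}_{N}$. That is the same direction as your ``easy'' Jensen/Gibbs-variational-principle inequality, not the reverse one. The novelty is that the bound holds at every fixed $k$ (not only asymptotically), via quantization---Proposition \ref{prop:key inequal for part} together with the Berndtsson-type convexity estimate in Lemma \ref{lem:bound on L funct}---replacing the earlier limsup argument from \cite{berm8 comma 5} that used the transfinite-diameter asymptotics of \cite{b-b}. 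The ``reverse inequality'' you describe (an upper bound on $\mathcal{Z}_{N}$ in terms of $\inf\mathcal{M}_{\beta}$) is not provided by this estimate and is exactly where the conjecture remains unproven. A clean statement of your plan would have been to prove $(1)\Rightarrow(2)$ via the quantitative bound, observe that the Hölder monotonicity in $\beta$ you noted (correct) reduces coercivity of $\mathcal{M}_{\beta}$ to boundedness of $\mathcal{M}_{\beta'}$ for $\beta'$ slightly below $\beta$, and then flag $(2)\Rightarrow(1)$ and the LDP lower bound as genuinely open rather than as a step that ``the new quantitative partition-function bound'' should supply.
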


The reason that one has to pass to a subsequence in the conjectural
convergence above is that a minimizer of $\mathcal{M}_{\beta}$ need
not be uniquely determined, unless $dV$ is assumed to have positive
Ricci curvature and $\beta>-1.$ The integrability condition $\mathcal{Z}_{N_{k}}(\beta)<\infty$
is, however, independent of choice of volume form $dV.$ Accordingly,
one obtains invariants of the Fano manifold $X$ by setting
\begin{equation}
\gamma_{k}(X)=\sup_{\gamma>0}\left\{ \gamma:\,\,\,\mathcal{Z}_{N_{k}}(-\gamma)<\infty\right\} \,\,\,\gamma(X):=\liminf_{k\rightarrow\infty}\gamma_{k}(X).\label{eq:def of gamma k and gamma}
\end{equation}
and $X$ is called \emph{uniformly Gibbs stable if $\gamma>1.$ }This
is, a priori, a stronger condition than Gibbs stability (which amounts
to the condition that $\gamma_{k}(X)>1$ for any sufficiently large
$k).$\emph{ }

The validity of the equivalence $"1\iff2"$ in the previous conjecture
would, in particular, imply that a Fano manifold $X$ is uniformly
Gibbs stable iff $X$ admits a unique Kähler-Einstein metric (in analogy
to the uniform version of YTD \cite{bbj}, which, in fact, is equivalent
to the ordinary formulation of the YTD \cite{l-x-z}). The general
equivalence $"1\iff2"$ may be reformulated as the following identity:
\begin{equation}
\gamma(X)=\sup_{\gamma>0}\left\{ \gamma:\,\,\,\inf_{\mathcal{H}}\mathcal{M}_{-\gamma}>-\infty\right\} ,\label{eq:gamma equals Gamma}
\end{equation}
as follows from \cite[Thm 1.2]{berm6} (when restricted to $\beta\geq-1$
the supremum in the right hand side above coincides with the maximal
existence time $R(X)$ for Aubin's equations \ref{eq:Aubin eq}).
Moreover, as shown in \cite[Section 7]{berm11} and \cite[Thm 2.3]{berm12},
in order to prove the conjectured convergence towards a minimizer
of $\mathcal{M}_{\beta}$ it is enough to extend the asymptotics \ref{eq:lim Z N is inf Mab intro}
to $\beta<0.$

\subsection{Main results}

For $\beta<0$ the limsup upper bound in formula \ref{eq:lim Z N is inf Mab intro}
was established in \cite[Thm 6.7]{berm8 comma 5} (by combining Gibbs
variational principle in statistical mechanics with the asymptotics
for transfinite diameters in \cite[Thm 6.7]{b-b}). The main new result
in the present work is the following quantitative upper bound that
holds for any fixed $k$, shown using a completely different argument.
Henceforth, we set $\gamma:=-\beta$.
\begin{thm}
\label{thm:Main intro}There exists a constant $C>0$ (depending only
on the reference volume for $dV_{X})$ such that for any $\gamma>0$
and positive integer $k$
\[
-\frac{1}{N}\log\mathcal{Z}_{N}(-\gamma)\leq\frac{k+\gamma}{k+1}\inf_{\mathcal{H}}\mathcal{M}_{-\gamma c_{k}}+k^{-1}\gamma\left(C+\left(|1-\gamma|+C\right)\log\left\Vert \frac{dV}{dV_{X}}\right\Vert _{L^{\infty}(X)}\right),
\]
 where $c_{k}:=(1-Ck^{-1})(k+1)/(k+\gamma).$
\end{thm}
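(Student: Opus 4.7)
The strategy is to apply the Gibbs variational principle to $\mathcal{Z}_N(-\gamma)$ and then identify the resulting upper bound with the twisted Mabuchi functional via a Donaldson-type quantization step. Writing the integrand as $e^{-\gamma H_N^{(k)}}\,dV^{\otimes N}$ with $H_N^{(k)} := \tfrac{1}{k}\log\|\det S^{(k)}\|^2$, the Gibbs inequality yields, for any probability measure $\nu$ on $X^N$,
\[
-\log\mathcal{Z}_N(-\gamma) \leq \gamma\int_{X^N} H_N^{(k)}\,d\nu + \mathrm{Ent}(\nu\,|\,dV^{\otimes N}).
\]
The plan is to choose $\nu$ encoding a K\"ahler form $\omega\in\mathcal{H}$, evaluate both terms on the right in K\"ahler-geometric language, and minimize over $\omega$.

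The simplest workable choice is the product measure $\nu = \mu_\omega^{\otimes N}$ with $\mu_\omega := \omega^n/V$, for which the entropy splits as $N\,\mathrm{Ent}(\mu_\omega\,|\,dV)$. To obtain the precise factor $(k+\gamma)/(k+1)$, however, a sharper choice such as the determinantal ``free-fermion'' measure $\nu \propto \|\det S^{(k)}\|_\omega^2\,\mu_\omega^{\otimes N}$, whose $m$-point correlations are Bergman determinants, seems required. In either case the key geometric input is the pointwise identity
\[
\|\det S^{(k)}(x_1,\ldots,x_N)\|_\omega^2 = \det(G_\omega)\cdot\det\bigl[K_\omega(x_i,x_j)\bigr]_{i,j=1}^N,
\]
where $G_\omega$ is the Gram matrix of $(s_i^{(k)})$ in the Hilbert inner product on $H^0(X,-kK_X)$ attached to $\omega$ and $K_\omega$ is the associated Bergman reproducing kernel. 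Passing from the pointwise norm $\|\cdot\|$ (induced by $dV$) to $\|\cdot\|_\omega$ generates a linear correction $\sum_l (\phi-\phi_0)(x_l)$, where $\phi,\phi_0$ are the weights of $\omega,dV$, which is precisely the term coupling to the twisted linear piece of the Mabuchi functional.

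The Gram-determinant piece $\frac{1}{Nk}\log\det G_\omega$ is, up to sign, Donaldson's quantized Aubin-Yau energy, and by the Tian-Zelditch-Catlin expansion it approximates the continuous Monge-Amp\`ere energy $\mathcal{E}(\phi-\phi_0)$ at rate $O(1/k)$. The off-diagonal determinant $\det[K_\omega(x_i,x_j)]$ is controlled from above via Hadamard's inequality by $\prod_l K_\omega(x_l,x_l)$, and the uniform asymptotic $K_\omega(x,x) = N(1+O(k^{-1}))$ absorbs this contribution into lower-order terms. Combining these estimates and dividing by $N$, the right-hand side of the Gibbs inequality reduces to
\[
\mathrm{Ent}(\mu_\omega\,|\,dV) + \gamma\int(\phi-\phi_0)\,\mu_\omega - \gamma\,\mathcal{E}(\phi-\phi_0) + O(k^{-1}),
\]
which is a scalar multiple of the twisted Mabuchi functional $\mathcal{M}_{-\gamma}(\omega)$. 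The error term $C + (|1-\gamma|+C)\log\|dV/dV_X\|_\infty$ comes from the discrepancy between the inner product induced by $dV$ and the reference inner product induced by $dV_X$ (in which $(s_i^{(k)})$ is orthonormal), which propagates linearly through $\log\det G_\omega$ and the Bergman expansion.

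The hard part will be the precise finite-$k$ scaling. The crude argument sketched above yields $\mathcal{M}_{-\gamma}(\omega) + O(1/k)$ errors, which is enough to recover the asymptotic identity $\lim_N -\tfrac1N \log \mathcal{Z}_N(-\gamma) = \inf_\mathcal{H} \mathcal{M}_{-\gamma}$ but not the stated finite-$k$ bound with the specific prefactor $(k+\gamma)/(k+1)$ and shifted parameter $c_k = (1-Ck^{-1})(k+1)/(k+\gamma)$. Extracting these requires a sharper Cauchy-Binet-type identity relating the Gram-determinant to the Monge-Amp\`ere energy \emph{without} absorbing the combinatorial $N$ vs.\ $N+\gamma$ prefactor into the error, or alternatively a clever choice of $\nu$ that builds in the correct finite-$k$ scaling from the start. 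Once such an identity is in hand, taking the infimum over $\omega \in \mathcal{H}$ on the right-hand side yields the stated bound.
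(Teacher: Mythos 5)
Your approach takes a genuinely different route from the paper, and it has a real gap that you yourself flag. You apply the Gibbs variational principle to $\mathcal{Z}_N(-\gamma)$, choose a test measure $\nu$ (product or determinantal), and then try to recover the Mabuchi functional via Bergman-kernel asymptotics. This is essentially the strategy that produced the earlier $\limsup$ upper bound in the literature (the paper itself notes that the $\limsup$ bound was obtained by ``combining Gibbs variational principle ... with the asymptotics for transfinite diameters,'' and that the present quantitative bound is proved ``using a completely different argument''). The Gibbs route can recover the limit $\lim_N -\tfrac1N\log\mathcal{Z}_N(-\gamma)=\inf_\mathcal{H}\mathcal{M}_{-\gamma}$, but — as you admit — it does not yield the finite-$k$ bound with the explicit prefactor $(k+\gamma)/(k+1)$ and the shifted parameter $c_k$. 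That admission is the crux: the proposal stops exactly where the theorem's content begins.

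The paper's argument bypasses the Gibbs variational principle entirely. The key move (Proposition \ref{prop:key inequal for part}) is to rewrite $\mathcal{Z}_{N,-\gamma}=\int_{X^N}\|\det S^{(k)}\|_{k\phi}^{-2\gamma/k}\bigl(e^{-(\gamma\phi+(1-\gamma)\phi_0)}\bigr)^{\otimes N}$ for an arbitrary metric $\phi$ with $e^{-(\gamma\phi+(1-\gamma)\phi_0)}$ normalized to a probability measure, and then apply Jensen's/H\"older's inequality with negative exponent $-\gamma/k$ to the \emph{whole} $N$-fold integral at once. Combined with the Andreief identity $\int_{X^N}\|\det S^{(k)}\|_{k\phi}^2\mu^{\otimes N}=N!\det H^{(k)}(\phi,\mu)$, this converts $-\log\mathcal{Z}_N$ in one stroke into $(1+\gamma k^{-1})\inf_\phi\mathcal{D}_{k,-\gamma}(\phi)+\frac{1}{kN}\log N$, with no test-measure entropy term and no choice of $\nu$. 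The explicit algebraic prefactor $(1+\gamma k^{-1})$ and the $(k+\gamma)$ normalization of $\mathcal{L}_k$ come out exactly from this exponent manipulation, not from asymptotics. The remaining step (Lemma \ref{lem:bound on L funct}) bounds $\mathcal{L}_k$ against $\mathcal{E}$ with an explicit $O(k^{-1})$ error via Berndtsson's positivity of direct image bundles along a geodesic — a convexity argument, not a pointwise Bergman expansion — and this is what produces the precise $\epsilon=(\gamma-1)/(k+\gamma)$ shift and hence $c_k$. The final passage to $\mathcal{M}$ uses the inequality $\mathcal{D}_{-\gamma}\leq\gamma^{-1}\mathcal{M}_{-\gamma}$ from the thermodynamical formalism.

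Concretely, the missing idea in your proposal is the Jensen/H\"older step with negative exponent applied directly to the partition-function integral: it is this step that replaces the Gibbs variational principle and that manufactures the exact finite-$k$ prefactors algebraically rather than asymptotically. Without it, a determinantal choice of $\nu$ plus Tian--Zelditch--Catlin asymptotics will never isolate $(k+\gamma)/(k+1)$ from the $O(k^{-1})$ error cloud. Secondly, your use of Hadamard's inequality and the diagonal Bergman kernel asymptotic $K_\omega(x,x)=N(1+O(k^{-1}))$ is replaced in the paper by the geodesic-concavity argument (Berndtsson), which is what gives an error term with the correct sign and a constant depending only on the reference volume form; pointwise Bergman asymptotics alone do not control the sign of the correction or its dependence on $\phi$. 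So the proposal is not a correct proof of the stated theorem, though it is a correct sketch of the weaker asymptotic result.
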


For $\gamma\leq1$ the first term in the right hand side of the previous
inequality may be replaced by the infimum of $\mathcal{M}_{-\gamma(1-Ck^{-1})}$
(see Section \ref{subsec:The-case gamma smaller}).

The previous theorem immediately implies one direction of the conjectured
equality \ref{eq:gamma equals Gamma}:
\begin{cor}
\label{cor:intro}The following inequality holds
\[
\gamma(X)\leq\sup_{\gamma>0}\left\{ \gamma:\,\,\,\inf_{\mathcal{H}}\mathcal{M}_{-\gamma}>-\infty\right\} 
\]
In other words $"1\implies2"$ in Conjecture \ref{conj:Let--be}.
In particular, if $X$ is uniformly Gibbs stable, then $X$ admits
a unique Kähler-Einstein metric.
\end{cor}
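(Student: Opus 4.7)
The plan is to derive the corollary almost immediately from Theorem \ref{thm:Main intro} by rearrangement and the limit $c_{k}\to 1$. Set $\gamma^{*}(X):=\sup\{\gamma>0:\inf_{\mathcal{H}}\mathcal{M}_{-\gamma}>-\infty\}$; the first task is to prove $\gamma(X)\leq\gamma^{*}(X)$, and the K\"ahler--Einstein statement will then drop out of the variational theory that already underlies \ref{eq:gamma equals Gamma}.

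To keep the estimate clean I would first specialize to the reference volume $dV=dV_{X}$. This is legitimate because the integrability condition defining $\gamma_{k}(X)$ is independent of the choice of $dV$, and this choice annihilates the $\log\|dV/dV_{X}\|_{L^{\infty}}$ factor in Theorem \ref{thm:Main intro}, leaving only the $C\gamma/k$ error. Fix any $\gamma\in(0,\gamma(X))$. Since $\gamma(X)=\liminf_{k}\gamma_{k}(X)$, for all $k$ large enough we have $\gamma<\gamma_{k}(X)$, hence $\mathcal{Z}_{N_{k}}(-\gamma)<\infty$, and so $-N^{-1}\log\mathcal{Z}_{N}(-\gamma)$ is a finite real number. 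Rearranging Theorem \ref{thm:Main intro} (with $dV=dV_{X}$) then gives
\[
\inf_{\mathcal{H}}\mathcal{M}_{-\gamma c_{k}}\;\geq\;\frac{k+1}{k+\gamma}\Bigl(-\frac{1}{N}\log\mathcal{Z}_{N}(-\gamma)-\frac{C\gamma}{k}\Bigr)\;>\;-\infty,
\]
so $\gamma c_{k}$ lies in the set whose supremum defines $\gamma^{*}(X)$; that is, $\gamma^{*}(X)\geq\gamma c_{k}$ for every such $k$. Since $c_{k}=(1-Ck^{-1})(k+1)/(k+\gamma)\to 1$ as $k\to\infty$, passing to the limit yields $\gamma^{*}(X)\geq\gamma$. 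As $\gamma<\gamma(X)$ was arbitrary, $\gamma(X)\leq\gamma^{*}(X)$, which is the first assertion.

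For the second assertion, uniform Gibbs stability means $\gamma(X)>1$, so by what has just been shown $\gamma^{*}(X)>1$, and hence $\inf_{\mathcal{H}}\mathcal{M}_{-\gamma_{0}}>-\infty$ for some $\gamma_{0}>1$. By the variational theory cited via \ref{eq:gamma equals Gamma} and \cite[Thm 1.2]{berm6}, such a bound for $\gamma_{0}$ strictly greater than $1$ is equivalent to coercivity of $\mathcal{M}_{-1}$ on $\mathcal{H}$, and therefore produces a unique minimizer --- the unique K\"ahler--Einstein metric on $X$. The substantive work is Theorem \ref{thm:Main intro} itself; the corollary is essentially one rearrangement plus the benign limit $c_{k}\to 1$, and the only non-trivial external input is the well-known variational characterization of KE metrics as minimizers of the twisted Mabuchi functional. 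Consequently I do not foresee a genuine obstacle in the corollary --- the whole difficulty has been absorbed into the proof of the theorem.
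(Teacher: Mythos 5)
Your proposal is correct and is essentially the paper's own argument: the paper treats the corollary as an immediate consequence of Theorem \ref{thm:Main intro}, and your rearrangement (finiteness of $\mathcal{Z}_{N_k}(-\gamma)$ for large $k$ forces $\inf_{\mathcal{H}}\mathcal{M}_{-\gamma c_k}>-\infty$, then $c_k\to1$) together with the standard fact that lower-boundedness of $\mathcal{M}_{-\gamma_0}$ for some $\gamma_0>1$ gives coercivity of $\mathcal{M}_{-1}$ and hence a unique K\"ahler--Einstein metric is exactly the intended deduction. The only point worth making explicit is that the finiteness of $\inf_{\mathcal{H}}\mathcal{M}_{-\gamma}$ is itself independent of the choice of $dV$ (the two free energies differ by terms bounded by $\|\phi_0-\psi_0\|_{L^\infty}$), so specializing to $dV=dV_X$ loses nothing.
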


As next explained this corollary also follows from combining the algebro-geometric
results in \cite[Thm 6.7]{f-o} with the solution of the (uniform)
YTD-conjecture in \cite[Thm 6.7]{c-d-s} (or \cite[Thm 6.7]{bbj})
and its very recent generalization in \cite[Thm 6.7]{zh} (which applies
to general $\beta).$ More precisely, exploiting that $\gamma_{k}(X)$
may be realized as the log canonical threshold (lct) of an anti-canonical
divisor on $X^{N_{k}}$ it is shown in \cite[Thm 2.5]{f-o} that $\gamma_{k}(X)$
is bounded from above by the invariant $\delta_{k}(X)$ introduced
in \cite{f-o}: 
\begin{equation}
\gamma_{k}(X)\leq\delta_{k}(X):=\inf_{\Delta_{k}}\text{lct }(\Delta_{k}),\label{eq:gamma k smaler then delta k}
\end{equation}
 where the infimum is taken over all anti-canonical $\Q-$divisors
$\Delta_{k}$ on $X$ of $k-$basis type, i.e. $\Delta_{k}$ is the
normalized sum of the $N$ zero-divisors on $X$ defined by the members
of a given basis in $H^{0}(X,-kK_{X})$. In particular, 
\[
\gamma(X)\leq\delta(X):=\limsup_{k\rightarrow\infty}\delta_{k}(X),
\]
 where the invariant $\delta(X)$ characterizes uniform K-stability;
$\delta(X)>1$ iff $X$ is uniformly K-stable \cite{fu2}. Recently,
it was shown in \cite{rtz} that $\delta_{k}(X)$ coincides with the
coercivity threshold of the quantized Ding functional on the symmetric
space $GL(N,\C)/U(N).$ Combining this result with the quantized maximum
principle in \cite{bern}, it was then shown in \cite{zh} that $\delta(X)$
coincides with the coercivity threshold of the Ding functional (as
further discussed in Section \ref{subsec:Comparison-with-the qu}).
Finally, Cor \ref{cor:intro} follows from \cite[Thm 3.4]{berm6},
which implies that the coercivity thresholds of the Ding and the Mabuchi
functionals coincide.

\subsubsection{Outline of the proof of Theorem \ref{thm:Main intro}}

The proof of Theorem \ref{thm:Main intro} is surprisingly simple.
The key new observation is an inequality which, in its simplest form,
$\beta=-1$ (i.e. $\gamma=1),$ may be formulated as follows: 
\begin{equation}
-\log\mathcal{Z}_{N}\leq\left(1+k^{-1}\right)\inf_{\mathcal{H}}\mathcal{D}_{k}+\frac{1}{kN}\log N\label{eq:ineq between log Z N and first Ding intro}
\end{equation}
 where the infimum runs over the space $\mathcal{H}$ of all metrics
on $-K_{X}$ with positive curvature and $\mathcal{D}_{k}$ is a certain
(scale invariant) functional on $\mathcal{H},$ approximating the
twisted Ding functional $\mathcal{D}$ (in the sense that $\mathcal{D}_{k}$
converges towards $\mathcal{D}$ as $k\rightarrow\infty);$ see formula
\ref{eq:def of D k gamma on H}. Next, by an inequality established
in \cite{b-f} (leveraging the positivity of direct image bundles
in \cite{bern0}) there exists a constant $C$ such that
\[
\mathcal{D}_{k}\leq\mathcal{D}-Ck^{-1}\mathcal{E}\,\,\,\text{on \ensuremath{\mathcal{H}_{0}}},
\]
 where $\mathcal{H}_{0}$ denotes the subspace of all sup-normalized
metrics on $\mathcal{H}$ and $\mathcal{E}$ denotes the standard
functional on $\mathcal{H}$ defined as the primitive of the Monge-Ampère
operator (which is non-positive on $\mathcal{H}_{0}).$ Finally, using
the well-known fact that $\mathcal{D}$ is bounded from above by the
Mabuchi functional $\mathcal{M}$ this proves Theorem \ref{thm:Main intro}
when $\gamma=1$ (by absorbing the error term $-Ck^{-1}\mathcal{E}$
in the subscript $\gamma$ of the twisted Mabuchi functional $\mathcal{M}_{\gamma}).$
A slight twist of this argument yields the inequality in Theorem \ref{thm:Main intro}
for a general $\gamma,$ using the thermodynamical formalism in \cite{berm6}. 

\subsection{\label{subsec:Comparison-with-the qu}Comparison with the quantization
approach}

In the quantization approach to Kähler geometry, which goes back to
\cite{y2,ti00,do0,do1}, the space $\mathcal{H}(L)$ of all Hermitian
metrics on a holomorphic line bundle $L$ over a complex manifold
$X$ is approximated by the finite dimensional space $\mathcal{H}_{k}(L)$
of all Hermitian metrics on the $N-$dimensional complex vector space
$H^{0}(X,kL)$ \footnote{In physical terms $\mathcal{H}_{k}(L)$ can be viewed as the quantization
of $\mathcal{H}$ with $k^{-1}$ playing the role of Planck's constant
in quantum mechanics \cite{do0}.} The space $\mathcal{H}_{k}(L)$ may be identified with the symmetric
space $GL(N,\C)/U(N).$ When $X$ is Fano and $L=-K_{X}$ a quantization
of the Ding functional $\mathcal{D}$ on $\mathcal{H}$ was introduced
in \cite{bbgz}, building on \cite{do1c}, which defines a functional
on $\mathcal{H}_{k}$ that we shall denote by $D_{k}$ (formula \ref{eq:def of D k gamma on H k}).
Here it is observed (Prop \ref{prop:inequal for quantized Ding})
that

\begin{equation}
\inf_{\mathcal{H}_{k}}D_{k}=\left(1+k^{-1}\right)\inf_{\mathcal{H}}\mathcal{D}_{k},\label{eq:inf D k equals inf D k intro}
\end{equation}
where $\mathcal{D}_{k}$ is the approximation on $\mathcal{H}$ of
the Ding functional $\mathcal{D}$ which appeared in the inequality
\ref{eq:ineq between log Z N and first Ding intro}. As a consequence,
\begin{equation}
-\log\mathcal{Z}_{N}\leq\inf_{\mathcal{H}_{k}}D_{k}+\frac{1}{kN}\log N.\label{eq:ZN in terms of inf intro}
\end{equation}
A similar inequality holds for a general $\gamma$ (see Theorem \ref{thm:bound on Z in terms of D on H k})
which yields a new proof of the inequality \ref{eq:gamma k smaler then delta k}. 

This line of reasoning is inspired by K.Zhang's very recent new proof
of the uniform YTD conjecture for Fano manifolds \cite{zh}. In fact,
the author discovered the equality \ref{eq:inf D k equals inf D k intro}
while trying to find a conceptual replacement for an inequality used
in the proof of \cite[Thm 5.1]{zh} (involving Tian's $\alpha-$invariant
\cite{ti000}). One virtue of the present approach is that it directly
yields a quantitative estimate on the infimum of $D_{k,\beta}$ over
$\mathcal{H}_{k}:$
\begin{equation}
\inf_{\mathcal{H}_{k}}D_{k,-\gamma}\leq\frac{k+\gamma}{k+1}\inf_{\mathcal{H}}\mathcal{M}_{-\gamma c_{k}}+k^{-1}\gamma\left(C+\left(|1-\gamma|+C\right)\log\left\Vert \frac{dV}{dV_{X}}\right\Vert _{L^{\infty}(X)}\right)\label{eq:inf D k over H k smaller than Mab intro}
\end{equation}
 by combining formula \ref{eq:inf D k equals inf D k intro} (extended
to general $\gamma)$ with the inequality \ref{eq:ineq between log Z N and first Ding intro}
(extended to general $\gamma)$. As in \cite[Thm 5.1]{zh} this shows
that uniform K-stability of $X$ implies that $X$ admits a unique
Kähler-Einstein metric. Indeed, as shown in \cite{rtz}, building
on \cite{f-o,fu2}, uniform K-stability is equivalent to the existence
of some $\epsilon>0$ such that the infimum of $D_{k,-1-\epsilon}$
on $\mathcal{H}_{k}$ is finite for $k$ sufficiently large. By the
inequality \ref{eq:inf D k over H k smaller than Mab intro} this
implies that $\mathcal{M}_{-1-\epsilon}$ is bounded from below (or
equivalently, that $\mathcal{M}_{-1}$ is coercive) which, in turn,
implies that $X$ admits a unique Kähler-Einstein metric (as first
shown in \cite{ti} using Aubin's method of continuity and then using
a direct variational approach in \cite{berm6,bbegz}, which applies
to any $\gamma$).

\subsection{Outlook on converse bounds and exceptional Fano orbifolds}

The converse of the inequality \ref{eq:inf D k over H k smaller than Mab intro}
also holds, as follows from \cite[Lemma 7.7]{bbgz}. As a consequence,
\begin{equation}
\delta(X)=\sup_{\gamma>0}\left\{ \gamma:\,\,\,\inf_{\mathcal{H}}\mathcal{M}_{-\gamma}>-\infty\right\} .\label{eq:delta is inf M}
\end{equation}
This identity is equivalent to the result \cite[Thm 5.1]{zh} (which
is formulated in terms of the infimum of $\mathcal{D}_{\beta}$, but,
by \cite[Thm 1.1]{berm6}, this infimum coincides with the infimum
of $\mathcal{M}_{\beta}).$ It remains, however, to establish a similar
lower bound on $-\log\mathcal{Z}_{N,-\gamma}$ or, at least, the missing
lower bound on $\gamma(X)$ in the conjectured formula \ref{eq:gamma equals Gamma}.
By formula \ref{eq:delta is inf M}, this amounts to upgrading the
inequality between $\gamma(X)$ and $\delta(X)$ in Cor \ref{cor:intro}
to an equality. In contrast, it should be stressed that the inequality
\ref{eq:gamma k smaler then delta k} between $\gamma_{k}(X)$ and
$\delta_{k}(X)$ is \emph{not }an equality, in general. For example,
when $X$ is the Riemann sphere, i.e. the complex projective line
$\P^{1},$ 
\[
\gamma_{k}(X)=1-\frac{1}{2k+1},\,\,\,\delta_{k}(X)=1.
\]
\cite{fu,rtz}. This discrepancy becomes even more pronounced in the
more general of setting of Fano orbifolds $X,$ where the role of
$K_{X}$ is played by the orbifold canonical line bundle $K_{X_{orb}}.$
All the results in the present paper readily extend to the orbifold
setting. \footnote{using, in particular, the uniform asymptotics for Bergman measures
on orbifolds in \cite[Thm 1.4]{d-l-m} as a replacement for the inequality
\ref{eq:upper bound on Bergman kernel}} For example, any Fano orbifold curve is of the form 
\[
X=\P^{1}/G
\]
 where $G$ is the finite group acting on $\P^{1}$ induced by the
action on $\C^{2}$ of a finite subgroup of $SU(2).$ By the ``ADE-trichotomy''
such groups fall into the three classes, corresponding to the classification
of simply laced Dynkin diagrams; two infinite series $A_{n}$and $D_{n}$
and three exceptional cases $E_{6},E_{7}$ and $E_{8}.$ As it turns
out, the ADE-trichotomy is detected by the corresponding partition
functions at the canonical value $\gamma=1$ (as follows from \cite[Thm 3.5]{berm12}): 
\begin{itemize}
\item (A)~$\mathcal{Z}_{N}(-1)=\infty$ for all $N$ (i.e. $\gamma_{k}(X)<1$
for all $k)$
\item (D)~$\mathcal{Z}_{N}(-1)<\infty$ for $N\gg1,$ but not all $N$
(i.e. $\gamma_{k}(X)>1$ for $k$ sufficiently large) 
\item (E)~$\mathcal{Z}_{N}(-1)<\infty$ for all $N$ (i.e. $\gamma_{k}(X)>1$
for all). 
\end{itemize}
Moreover, $\gamma_{k}(X)$ is strictly increasing wrt $k.$ On the
hand it can be shown that 
\[
\delta_{k}(X)=\delta(X)
\]
 and thus $\delta(X)=\gamma(X),$ while, $\gamma_{k}(X)<\delta_{k}(X).$ 

The notion of exceptionality has been extended to general Fano orbifolds
\cite{c-p-s}, motivated by the Minimal Model Program in birational
algebraic geometry \cite{sh}. A Fano orbifold $X$ is said to be
exceptional if $\alpha(X)>1,$ where $\alpha(X)$ denotes Tian's alpha-invariant
\cite{ti000} (which, in algebro-geometric terms, coincides with the
global log canonical threshold of $X$ \cite{dem}). For example,
in \cite[Cor 1.1]{c-p-s} a finite list of exceptional Fano orbifold
surfaces $X$ is given, realized as hypersurfaces in weighted three-dimensional
complex projective space. In general, it follows readily from the
definitions that 
\[
\alpha(X)\leq\gamma_{k}(X)
\]
(cf. \cite[Lemma 7.1]{berm11}). As a consequence, if $X$ is exceptional,
then $\mathcal{Z}_{N}$ is finite for \emph{any} $N.$ Does the converse
also hold? For Fano orbifold curves this is, indeed, the case, according
to the ADE-list above. 

\subsection{Acknowledgments}

Thanks to Bo Berndtsson for comments on a first draft of this manuscript.
This work was supported by grants from the Knut and Alice Wallenberg
foundation, the Göran Gustafsson foundation and the Swedish Research
Council.

\section{Proof of Theorem \ref{thm:Main intro}}

\subsection{Setup }

We will use additive notation for line bundles and metrics. Accordingly,
the $k$ the tensor power of a holomorphic line bundle $L$ over an
$n-$dimensional complex manifold $X$ will be denoted by $kL$ and
if $\phi$ is a metric on $L$ then $k\phi$ denotes the induced metric
on $kL.$ Accordingly, if $s$ is a holomorphic section of $L,$ i.e.
$s\in H^{0}(X,L),$ the point-wise norm of $s$ with respect to a
metric $\phi$ on $L$ is denoted by $|s|_{\phi}$ Given a local trivializing
section of $L$ we may identify $s$ with a local holomorphic function
on $X$ and $\phi$ with a local smooth function so that
\[
|s|_{\phi}^{2}:=|s|^{2}e^{-\phi}
\]
and the normalized curvature of the metric $\phi$ may be expressed
as 
\[
dd^{c}\phi:=\frac{i}{2\pi}\partial\bar{\partial}\phi.
\]
 A smooth metric $\phi$ on $L$ is said to have positive curvature
if $dd^{c}\phi>0$ and semi-positive curvature of $dd^{c}\phi\geq0$
(when identified with an $n\times n$ Hermitian matrix). Equivalently,
this means that, locally, $\phi$ is plurisubharmonic (psh) and strictly
psh, respectively. Given a metric $\phi$ with semi-positive curvature
we denote by $MA(\phi)$ the corresponding Monge-Ampère measure, normalized
to have unit total mass:

\[
MA(\phi):=\frac{1}{V}(dd^{c}\phi)^{n}.
\]

\subsubsection{\label{subsec:The-anti-canonical-setup}The anti-canonical setup}

Henceforth, the line bundle $L$ will be taken to be the anti-canonical
line bundle $-K_{X}$ of $X,$ i.e. top exterior power of the tangent
bundle of $X.$ Then any smooth metric $\phi$ on $-K_{X}$ induces
a volume form on $X$ that we shall, abusing notation slightly, denote
by $e^{-\phi}.$ This notation is intended to reflect the fact that
if $z_{1},...,z_{n}$ are local holomorphic coordinates on $X$ and
$\phi$ is locally represented by a function with respect to the local
trivialization $\partial/\partial z_{1}\wedge\cdots\partial/\partial z_{n}$
of $-K_{X},$ then the volume form in question has density $e^{-\phi}$
with respect to the local Euclidean volume form corresponding to $z_{1},...,z_{n}.$ 

Given a metric $\phi$ on $-K_{X}$ and a volume form $\mu$ on $X$
we shall denote by $H^{(k)}\left(\phi,\mu\right)$ the corresponding
Hermitian metric on the $N-$dimensional complex vector space $H^{0}(X,-kK_{X}),$
defined by 
\[
H^{(k)}\left(\phi,\mu\right)(s,s):=\int_{X}|s|_{k\phi}^{2}\mu.
\]
 The space of all metrics on $\phi$ on $-K_{X}$ with positive curvature
will be denoted by $\mathcal{H}.$ We will fix once and for all a
reference metric $\psi_{0}$ in $\mathcal{H}$ and a basis $s_{1}^{(k)},...,s_{N}^{(k)}$
in $H^{0}(X,-kK_{X})$ which is orthonormal with respect to the corresponding
Hermitian norm $H^{(k)}\left(\psi_{0},e^{-\psi_{0}}\right)$ (in the
notation used in Section \ref{sec:Introduction}, $e^{-\psi_{0}}=dV_{X}).$
Accordingly, we can identify a Hermitian metric $H$ on $H^{0}(X,-kK_{X})$
with the corresponding $N\times N$ positive definite Hermitian matrix
$H(s_{i}^{(k)},s_{j}^{(k)}).$ 

\subsubsection{Energies}

Following (essentially) the notation in \cite{b-b} we denote by $\mathcal{E}$
the functional on $\mathcal{H}$ uniquely determined by the following
conditions:

\[
d\mathcal{E}_{|\phi}=MA(\phi),\,\,\,\,\mathcal{E}(\psi_{0})=0
\]
Alternatively, $\mathcal{E}(\phi)$ may be explicitly defined by 
\[
\mathcal{E}(\phi):=\frac{1}{V(n+1)}\int_{X}\sum_{j=0}^{n}(\phi-\psi_{0})(dd^{c}\phi)^{n-j}\wedge(dd^{c}\psi_{0})^{j}
\]
Dually, following \cite{bbgz}, the pluricomplex energy of a probability
measure $\mu$ on $X$ (wrt the reference metric $\psi_{0})$ is defined
by
\[
E(\mu)=\sup_{\phi\in\mathcal{H}}\left(\mathcal{E}(\phi)-\int_{X}(\phi-\psi_{0})\mu\right)
\]
(however in \cite{bbgz} the functional $\mathcal{E}(\phi)$ is denoted
by $E(\phi)$ and the pluricomplex energy is denoted by $E^{*}$).
We will use the following basic
\begin{lem}
There exists a positive constant $c_{X}$ only depending on $X$ such
that
\begin{equation}
-\mathcal{E}(\phi)+\sup_{X}(\phi-\psi_{0})\leq nE\left(MA(\phi)\right)+c_{X}.\label{eq:bounding J type by E}
\end{equation}
\end{lem}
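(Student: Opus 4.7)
The plan is to sandwich $-\mathcal{E}(\phi)+\sup_X(\phi-\psi_0)$ between the Aubin $J$-functional
$$
J(\phi) := \int_X (\phi-\psi_0)\, MA(\psi_0) - \mathcal{E}(\phi)
$$
and $n$ times the pluricomplex energy $E(MA(\phi))$, via two short steps and one classical input.

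First, I would turn the supremum into an integral. Setting $u:=\phi-\psi_0$, the function $u$ is $dd^c\psi_0$-psh, and the classical $L^1$-compactness of the family of sup-normalized $dd^c\psi_0$-psh functions on $X$ yields a uniform ``mean-value'' bound
$$
\sup_X u \;-\; \int_X u\, MA(\psi_0) \;\leq\; c_1,
$$
with $c_1$ depending only on $(X,\psi_0)$. Subtracting $\mathcal{E}(\phi)$ gives
$-\mathcal{E}(\phi)+\sup_X(\phi-\psi_0) \leq J(\phi) + c_1.$

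Second, I would show $J(\phi) \leq n\, E(MA(\phi))$. A brief Legendre-duality argument identifies
$$
E(MA(\phi)) \;=\; \mathcal{E}(\phi) - \int_X (\phi-\psi_0)\, MA(\phi) \;=\; I(\phi)-J(\phi),
$$
where $I(\phi) := \int(\phi-\psi_0)\,(MA(\psi_0)-MA(\phi))$ is Aubin's $I$-functional; indeed, the concave functional $\psi\mapsto \mathcal{E}(\psi)-\int(\psi-\psi_0)\, MA(\phi)$ is maximized where $MA(\psi)=MA(\phi)$, namely at $\psi=\phi+c$, and the maximum value is translation-invariant. The standard Aubin energy inequality $(n+1)J \leq n I$ then rearranges to $J \leq n(I-J) = n\, E(MA(\phi))$.

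Concatenating the two steps yields the lemma with $c_X := c_1$. The one genuinely non-trivial input is Aubin's comparison $(n+1)J \leq n I$, which is proved by expanding $MA(\phi)-MA(\psi_0)$ as a telescoping sum of the mixed Monge-Amp\`ere products $(dd^c\phi)^j\wedge (dd^c\psi_0)^{n-j}$ and integrating by parts; the $L^1$-compactness step and the identity $E(MA(\phi))=I-J$ are classical potential-theoretic facts.
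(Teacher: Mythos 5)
Your proof is correct and follows the same two-step structure as the paper's: first convert $\sup_X(\phi-\psi_0)$ into $\int_X(\phi-\psi_0)\,MA(\psi_0)$ up to a uniform constant (via compactness / the submean-value property for $\omega_0$-psh functions), then bound the resulting $J$-functional by $n\,E(MA(\phi))$. The only difference is that the paper simply cites \cite[Lemma 2.13]{berm6} for the second step, whereas you supply the proof of that lemma yourself, via the Legendre identity $E(MA(\phi))=\mathcal{E}(\phi)-\int_X(\phi-\psi_0)MA(\phi)=I(\phi)-J(\phi)$ (which uses concavity of $\mathcal{E}$ in the affine structure) together with Aubin's classical inequality $(n+1)J\leq nI$; that is exactly the content of the cited lemma, so the two arguments coincide.
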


\begin{proof}
This is essentially well-known but for completeness a short proof
is provided. First observe that there exists a constant $c_{X}$ such
that $\sup_{X}(\phi-\psi_{0})-c_{X}$ is bounded from above by the
integral of $(\phi-\psi_{0})$ against $MA(\psi_{0}).$ Indeed, this
follows directly follows from the submean property of plurisubharmonic
functions and the compactness of $X.$ Hence, the proof is concluded
by invoking the following basic inequality (see \cite[Lemma 2.13]{berm6}):
\[
J(\phi):=-\mathcal{E}(\phi)+\int_{X}(\phi-\psi_{0})MA(\psi_{0})\leq nE\left(MA(\phi)\right)
\]
\end{proof}

\subsubsection{The twisted Ding and Mabuchi functional associated to $(\phi_{0},\gamma)$}

Given a volume form $dV$ on $X$ we will denote by $\phi_{0}$ the
corresponding metric on $-K_{X}$ (i.e. $dV=e^{-\phi_{0}}).$ To the
pair $(\phi_{0},\gamma)$ we attach the \emph{twisted Ding functional}
on $\mathcal{H}$ defined by 
\[
\mathcal{D}_{-\gamma}(\phi):=-\mathcal{E}(\phi)-\frac{1}{\gamma}\log\int_{X}e^{-\left(\gamma\phi+(1-\gamma)\phi_{0}\right)}.
\]
(coinciding with the ordinary Ding functional when $\gamma=-1).$
The definition is made so that $\mathcal{D}_{-\gamma}$ is scale invariant,
i.e. invariant under $\phi\mapsto\phi+c$ for any $c\in\R.$ The corresponding
(twisted) Mabuchi functional is usually defined, modulo an additive
constant, by demanding that its first variation is proportional to
the (twisted) scalar curvature \cite{ma,s-t}, but here it will be
convenient to use the thermodynamical formalism introduced in \cite[Prop 4.1]{berm6}: 

\begin{equation}
\mathcal{M}_{-\gamma}(\phi):=F_{-\gamma}(\mu),\,\,\,\mu=MA(\phi),\label{eq:M as free energy}
\end{equation}
 where $F_{\gamma}(\mu)$ is the \emph{free energy} of a probability
measure $\mu$ on $X$ defined by 
\begin{equation}
F_{-\gamma}(\mu):=-\gamma\left(E(\mu)+\int_{X}(\phi_{0}-\psi_{0})\mu\right)+\text{Ent \ensuremath{\left(\mu|e^{-\left(\gamma\psi_{0}+(1-\gamma)\phi_{0}\right)}\right)}},\label{eq:def of free en}
\end{equation}
 where $\text{Ent}\ensuremath{\left(\mu|\nu\right)}$ denotes the
entropy of a measure $\mu$ on $X$ relative to the measure $\nu$
on $X$ (using the sign convention that renders $\text{Ent}\ensuremath{\left(\mu|\nu\right)}$
non-negative when $\mu$ and $\nu$ are both probability measures).
By \cite[Prop 3.5]{berm6}

\begin{equation}
\mathcal{D}_{-\gamma}(\phi)\leq\gamma^{-1}\mathcal{M}_{-\gamma}(\phi)\label{eq:Ding smaller than Mab}
\end{equation}
(moreover, the two functionals $\mathcal{D}_{-\gamma}$ and $\mathcal{M}_{-\gamma}$
have the same infimum over $\mathcal{H},$ but his fact will not be
needed here).
\begin{rem}
In the notation of \cite{berm6}, $\mathcal{D}_{-\gamma}=-\mathcal{G}_{-\gamma}$
and the definition of the free energy $F_{-\gamma}$ used here is
$-\gamma$ times the definition employed in \cite{berm6}. When $\gamma=1$
formula \ref{eq:M as free energy} is equivalent to the Tian-Chen
formula for the Mabuchi functional \cite{ti0,ch0} and the case $\gamma\neq1$
is closely related to the generalized Mabuchi functional introduced
in \cite[Def 6.1]{s-t}.
\end{rem}

\subsection{Two inequalities}

The key new observation in the proof of Theorem \ref{thm:Main intro}
is the following proposition which yields a bound, from below, on
the partition function 

\[
\mathcal{Z}_{N,-\gamma}=\int_{X^{N}}\left\Vert \det S^{(k)}\right\Vert _{k\phi_{0}}^{-2\gamma/k}(e^{-\phi_{0}})^{\otimes N},
\]
in terms of the infimum over the space of all metrics $\phi$ on $-K_{X}$
of the functional $\mathcal{D}_{k,-\gamma}$ defined by 
\begin{equation}
\mathcal{D}_{k,-\gamma}(\phi):=-\mathcal{L}_{k}(\phi)-\frac{1}{\gamma}\log\int_{X}e^{-\left(\gamma\phi+(1-\gamma)\phi_{0}\right)},\label{eq:def of D k gamma on H}
\end{equation}
 where 
\[
\mathcal{L}_{k}(\phi):=-\frac{1}{N(k+\gamma)})\log\det H^{(k)}\left(\phi,e^{-\left(\gamma\phi+(1-\gamma)\phi_{0}\right)}\right).
\]
 The normalization have been chosen to ensure that 
\[
\mathcal{L}_{k}(\phi+c)=\mathcal{L}_{k}(\phi)+c\,\,\,,\forall c\in\R.
\]
As a consequence, since $\mathcal{L}_{k}(\phi)$ is increasing wrt
$\phi,$ its differential $d\mathcal{L}_{k|\phi}$ may be represented
by a probability measure on $X.$ For future reference we note that
the probability measure in question coincides with the \emph{Bergman
measure }associated to the Hermitian metric $H^{(k)}\left(\phi,e^{-\left(\gamma\phi+(1-\gamma)\phi_{0}\right)}\right):$
\begin{equation}
d\mathcal{L}_{k|\phi}=B_{k\phi}:=\rho_{k\phi}e^{-\left(\gamma\phi+(1-\gamma)\phi_{0}\right)},\,\,\,\rho_{k\phi}:=\frac{1}{N}\sum_{i=1}^{N_{k}}\left|S_{i}\right|_{k\phi}^{2}\label{eq:diff of L}
\end{equation}
 where $S_{i}$ denotes any bases in $H^{0}(X,-kK_{X})$ which is
orthonormal wrt $H^{(k)}\left(\phi,e^{-\left(\gamma\phi+(1-\gamma)\phi_{0}\right)}\right)$
(as follows from \cite[Lemma 2.1]{b-b}).
\begin{prop}
\label{prop:key inequal for part}Given $(\phi_{0},\gamma)$ the following
inequality holds for any $k$:
\[
-\frac{1}{\gamma N_{k}}\log\mathcal{Z}_{N_{k}}(-\gamma)\leq\left(1+\gamma k^{-1}\right)\inf_{\phi}\mathcal{D}_{k,-\gamma}(\phi)+\frac{1}{kN}\log N
\]
 where the infimum runs over all smooth metrics $\phi$ on $-K_{X}.$ 
\end{prop}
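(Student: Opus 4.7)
The plan is to rewrite $\mathcal{Z}_{N}(-\gamma)$ with respect to an arbitrary reference metric $\phi\in\mathcal{H}$, change basis of $H^{0}(X,-kK_{X})$ to one orthonormal for $H^{(k)}(\phi,\nu)$ (which extracts the factor $\det H^{(k)}(\phi,\nu)^{-\gamma/k}$ needed to recover $\mathcal{L}_{k}$), and then bound the remaining integral from below by Jensen's inequality combined with the Heine--Andreief identity. All the $\log\det$ and $\log V_{0}$ contributions should then reassemble into $-\gamma N(1+\gamma/k)\mathcal{D}_{k,-\gamma}(\phi)$, and taking the infimum over $\phi$ yields the claim.

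First, for any $\phi\in\mathcal{H}$ I would exploit the identity $\|\cdot\|_{k\phi_{0}}^{2}=\|\cdot\|_{k\phi}^{2}\,e^{k(\phi-\phi_{0})}$ on each factor of $X^{N}$ to rewrite
\[
\mathcal{Z}_{N}(-\gamma)=\int_{X^{N}}\|\det S^{(k)}\|_{k\phi^{\otimes N}}^{-2\gamma/k}\,d\nu^{\otimes N},\qquad \nu:=e^{-(\gamma\phi+(1-\gamma)\phi_{0})}.
\]
Next I would change basis from the fixed $s_{i}^{(k)}$ to a basis $S'=(S'_{\alpha})$ orthonormal for $H^{(k)}(\phi,\nu)$; if $A$ denotes the change-of-basis matrix, then $\det S^{(k)}=\det A\cdot\det S'$ and $|\det A|^{2}=\det H^{(k)}(\phi,\nu)$, because $A^{*}A$ is the Gram matrix of $s_{i}^{(k)}$ in $H^{(k)}(\phi,\nu)$. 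This yields
\[
\mathcal{Z}_{N}(-\gamma)=\bigl(\det H^{(k)}(\phi,\nu)\bigr)^{-\gamma/k}\int_{X^{N}}\|\det S'\|_{k\phi^{\otimes N}}^{-2\gamma/k}\,d\nu^{\otimes N}.
\]

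For the remaining integral, set $V_{0}:=\int_{X}\nu$. Since $x\mapsto x^{-\gamma/k}$ is convex on $(0,\infty)$, Jensen's inequality applied to the positive measure $d\nu^{\otimes N}$ of total mass $V_{0}^{N}$ gives a lower bound in terms of the second moment $\int_{X^{N}}\|\det S'\|_{k\phi^{\otimes N}}^{2}\,d\nu^{\otimes N}$, which the Heine--Andreief (generalized Cauchy--Binet) identity evaluates to $N!$ because $S'$ is orthonormal for $H^{(k)}(\phi,\nu)$. Hence
\[
\int_{X^{N}}\|\det S'\|_{k\phi^{\otimes N}}^{-2\gamma/k}\,d\nu^{\otimes N}\;\geq\;V_{0}^{N(1+\gamma/k)}(N!)^{-\gamma/k}.
\]
Taking logarithms and using $\mathcal{L}_{k}(\phi)=-\frac{1}{N(k+\gamma)}\log\det H^{(k)}(\phi,\nu)$, so that $-\frac{\gamma}{k}\log\det H^{(k)}=\gamma N(1+\gamma/k)\mathcal{L}_{k}(\phi)$, the $\log\det$ and $N(1+\gamma/k)\log V_{0}$ terms combine into $-\gamma N(1+\gamma/k)\mathcal{D}_{k,-\gamma}(\phi)$, producing the target inequality with a residual logarithmic correction.

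The main obstacle is the sharpness of this last Jensen step: naively it delivers an error proportional to $\frac{\log N!}{kN}$, whereas the statement records the finer $\frac{1}{kN}\log N$. Closing this gap---either by a refinement of Jensen adapted to the determinantal structure of $\|\det S'\|^{2}$, or by working on a fundamental domain for the $S_{N}$-action on $X^{N}$ to cancel one factor of $N!$---is where the main analytic care is needed. Everything else is bookkeeping: once the exponents produced by the change of basis line up with the normalization of $\mathcal{L}_{k}$ by $N(k+\gamma)$, the coefficient $(1+\gamma/k)$ emerges automatically in front of $\mathcal{D}_{k,-\gamma}$.
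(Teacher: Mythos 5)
Your argument is essentially the paper's own proof, just with the bookkeeping rearranged. The paper rewrites $\mathcal{Z}_{N}(-\gamma)$ relative to $\phi$, normalizes $\phi$ by an additive constant so that $\nu=e^{-(\gamma\phi+(1-\gamma)\phi_{0})}$ is a probability measure, applies H\"older/Jensen with negative exponent $-\gamma/k$, and then invokes the Andreief/Gram identity $\int_{X^{N}}\|\det S^{(k)}\|_{k\phi}^{2}\mu^{\otimes N}=N!\det H^{(k)}(\phi,\mu)$ (this is \cite[Lemma 5.3]{b-b}, the same Heine--Andreief identity you use). You instead change basis first to peel off $\det H^{(k)}(\phi,\nu)^{-\gamma/k}$ and then apply Jensen to the orthonormalized integrand; this is a cosmetic reordering of the same two ingredients, and the normalization $c=\tfrac{1}{\gamma}\log\int\nu$ that the paper inserts plays exactly the role of your explicit factor $V_{0}^{N(1+\gamma/k)}$. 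Both routes give the coefficient $(1+\gamma k^{-1})$ from $\frac{1}{kN}=\frac{1+\gamma/k}{(k+\gamma)N}$ in the same way.

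The worry you flag about the residual term is not a gap in your argument but a typo in the statement of the proposition: tracing the paper's own proof, after using the $N!$ in the Andreief identity, the error term that comes out is $(1+\gamma k^{-1})\frac{1}{(k+\gamma)N}\log N!=\frac{1}{kN}\log N!$, exactly as you compute. There is no Jensen refinement or fundamental-domain trick hidden in the paper; the stated $\frac{1}{kN}\log N$ should read $\frac{1}{kN}\log N!$ (both are $O(k^{-1}\log k)$ since $N\sim k^{n}$, so nothing downstream is affected). So you should drop the last paragraph of speculation: your proof is complete as written, modulo this typographical correction to the target inequality.
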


\begin{proof}
Let $\phi$ be a metric on $-K_{X}.$ Then we can rewrite
\[
\mathcal{Z}_{N,-\gamma}:=\int_{X^{N}}\left\Vert \det S^{(k)}\right\Vert _{k\phi_{0}}^{-2\gamma/k}(e^{-\phi_{0}})^{\otimes N}=\int_{X^{N}}\left\Vert \det S^{(k)}\right\Vert _{k\phi}^{-2\gamma/k}\left(e^{-(\gamma\phi+(1-\gamma)\phi_{0})}\right){}^{\otimes N}.
\]
 Indeed, locally on each factor of $X^{N}$ this simply amounts to
rewriting 
\[
\left(e^{-k\phi_{0}}\right)^{-\gamma/k}e^{-\phi_{0}}=\left(e^{-k\phi}\right)^{-\gamma/k}e^{-\phi_{0}}e^{-\left(\gamma\phi+(1-\gamma)\phi_{0}\right)}
\]
Now assume that $\phi$ has the property that $e^{-\left(\gamma\phi+(1-\gamma)\phi_{0}\right)}$
is a probability measure. Then, applying Hölder's inequality with
negative exponent $-\gamma/k$ (or Jensen's inequality applied to
the convex function $t\mapsto t^{-\gamma/k}$ on $]-\infty,\infty[$)
yields
\[
\mathcal{Z}_{N,-\gamma}\geq\left(\int_{X^{N}}\left\Vert \det S^{(k)}\right\Vert _{k\phi}^{2}\left(e^{-(\gamma\phi+(1-\gamma)\phi_{0})}\right){}^{\otimes N}\right)^{-\gamma/k}.
\]
 Taking logarithms this means that 
\[
-\frac{1}{\gamma N}\mathcal{\log Z}_{N,-\gamma}\leq\frac{1}{kN}\log\int_{X^{N}}\left\Vert \det S^{(k)}\right\Vert _{k\phi}^{2}\left(e^{-(\gamma\phi+(1-\gamma)\phi_{0})}\right){}^{\otimes N}.
\]
Now, for any metric $\phi$ on $-K_{X}$ we may apply the previous
inequality to $\phi+\log\int_{X}e^{-(\gamma\phi+(1-\gamma)\phi_{0})}$
and deduce that $-\frac{1}{\gamma N}\mathcal{\log Z}_{N,-\gamma}$
is bounded from above by
\[
(1+\gamma k^{-1})\left(\frac{1}{(k+\gamma)N}\log\int_{X^{N}}\left\Vert \det S^{(k)}\right\Vert _{k\phi}^{2}\left(e^{-(\gamma\phi+(1-\gamma)\phi_{0})}\right){}^{\otimes N}-\frac{1}{\gamma}\log\int_{X}e^{-(\gamma\phi+(1-\gamma)\phi_{0})}\right).
\]
The proof is thus concluded by invoking the following formula \cite[Lemma 5.3]{b-b},
which holds for any volume form $\mu$ on $X:$ 
\[
\int_{X^{N}}\left\Vert \det S^{(k)}\right\Vert _{k\phi}^{2}\mu^{\otimes N}=N!\det\left(H^{(k)}\left(\phi,\mu\right)\right)
\]
\end{proof}
We will also use the following slight generalization of the inequality
in \cite[formula 3.4]{b-f} (to the case $\gamma\neq1):$ 
\begin{lem}
\label{lem:bound on L funct}There exists a constant $C_{0}$ depending
only on $\psi_{0}$ such that the following inequality holds for $\phi^{(\epsilon)}:=\phi(1-\epsilon)+\epsilon\psi_{0}$
with $\epsilon:=\frac{\gamma-1}{k+\gamma}:$
\[
-\frac{1}{(1-\epsilon)}\mathcal{L}_{k}\left(\phi^{(\epsilon)}\right)\leq-\mathcal{E}(\phi)+C_{0}k^{-1}\left(-\mathcal{E}(\phi)+\sup_{X}(\phi-\psi_{0})\right)+\frac{\left|\gamma-1\right|}{k+1}\left\Vert \phi_{0}-\psi_{0}\right\Vert _{L^{\infty}(X)}.
\]
\end{lem}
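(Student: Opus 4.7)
The plan is to reduce the statement to the $\gamma=1$ case recorded in \cite[formula 3.4]{b-f}, via an algebraic identity tailor-made for the choice $\epsilon=(\gamma-1)/(k+\gamma)$. This choice forces the two identities $(k+\gamma)(1-\epsilon)=k+1$ and $(k+\gamma)\epsilon=\gamma-1$, from which a direct substitution into $\phi^{(\epsilon)}=(1-\epsilon)\phi+\epsilon\psi_0$ gives
\[
(k+\gamma)\phi^{(\epsilon)} + (1-\gamma)\phi_0 \;=\; (k+1)\phi - (\gamma-1)(\phi_0 - \psi_0).
\]
Consequently, for every $s\in H^0(X,-kK_X)$,
\[
H^{(k)}\bigl(\phi^{(\epsilon)},\, e^{-(\gamma\phi^{(\epsilon)}+(1-\gamma)\phi_0)}\bigr)(s,s) \;=\; \int_X |s|^2_{(k+1)\phi}\, e^{(\gamma-1)(\phi_0-\psi_0)},
\]
i.e.\ the $\gamma=1$ integrand $|s|^2_{(k+1)\phi}$ multiplied by the pointwise scalar $e^{(\gamma-1)(\phi_0-\psi_0)}$, which is pinched between $e^{\pm|\gamma-1|M}$ with $M:=\|\phi_0-\psi_0\|_{L^\infty(X)}$.

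Next I would promote this pointwise pinching to the operator inequality
\[
e^{-|\gamma-1|M}\, H^{(k)}(\phi,e^{-\phi}) \;\leq\; H^{(k)}\bigl(\phi^{(\epsilon)},\, e^{-(\gamma\phi^{(\epsilon)}+(1-\gamma)\phi_0)}\bigr) \;\leq\; e^{|\gamma-1|M}\, H^{(k)}(\phi,e^{-\phi})
\]
on the $N$-dimensional space $H^0(X,-kK_X)$. Since $\det$ is monotone on positive Hermitian matrices, taking $\log\det$ and dividing by $N(k+1)$, then using $(1-\epsilon)^{-1}=(k+\gamma)/(k+1)$ to identify $-\tfrac{1}{1-\epsilon}\mathcal{L}_k(\phi^{(\epsilon)}) = \tfrac{1}{N(k+1)}\log\det H^{(k)}(\phi^{(\epsilon)}, e^{-(\gamma\phi^{(\epsilon)}+(1-\gamma)\phi_0)})$, yields
\[
-\frac{1}{1-\epsilon}\mathcal{L}_k(\phi^{(\epsilon)}) \;\leq\; \frac{1}{N(k+1)}\log\det H^{(k)}(\phi,e^{-\phi}) + \frac{|\gamma-1|}{k+1}\,M.
\]

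Finally, I would close the argument by invoking \cite[formula 3.4]{b-f}, which is exactly the $\gamma=1$ case of the target lemma and asserts
\[
\frac{1}{N(k+1)}\log\det H^{(k)}(\phi,e^{-\phi}) \;\leq\; -\mathcal{E}(\phi) + C_0 k^{-1}\bigl(-\mathcal{E}(\phi) + \sup_X(\phi-\psi_0)\bigr)
\]
for some constant $C_0=C_0(\psi_0)$; chaining the two displays produces the desired bound. The one genuinely non-routine step is spotting the algebraic identity above: the specific value $\epsilon=(\gamma-1)/(k+\gamma)$ is engineered so that $(k+\gamma)\phi^{(\epsilon)}+(1-\gamma)\phi_0$ differs from the $\gamma=1$ weight $(k+1)\phi$ only by a $k$-\emph{independent} potential, whose $L^\infty$-norm then supplies the additive error $|\gamma-1|\|\phi_0-\psi_0\|_{L^\infty}/(k+1)$ in the final estimate. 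Everything else is routine matrix monotonicity plus a black-box appeal to \cite{b-f}.
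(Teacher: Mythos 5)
Your proof is correct and takes a genuinely different route from the paper. Both arguments hinge on the same identity forced by the choice of $\epsilon$, namely $(k+\gamma)\phi^{(\epsilon)}+(1-\gamma)\phi_0=(k+1)\phi-(\gamma-1)(\phi_0-\psi_0)$, but exploit it differently. The paper runs a weak geodesic $\psi_t$ from $\psi_0$ to $\phi$, uses this decomposition to show that $t\mapsto\mathcal{L}_k\bigl(\psi_t^{(\epsilon)}\bigr)$ is concave by Berndtsson's positivity of direct image bundles while $t\mapsto\mathcal{E}(\psi_t)$ is affine, controls the derivative at $t=0$ via an upper bound on the Bergman kernel, and only at the end handles $\phi_0\neq\psi_0$ with a separate $L^\infty$ correction. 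You instead observe that the identity makes the general-$\gamma$ Gram matrix $H^{(k)}\bigl(\phi^{(\epsilon)},e^{-(\gamma\phi^{(\epsilon)}+(1-\gamma)\phi_0)}\bigr)$ equal to the $\gamma=1$ Gram matrix $H^{(k)}(\phi,e^{-\phi})$ computed against the same measure twisted by the conformal factor $e^{(\gamma-1)(\phi_0-\psi_0)}$, which is pinched in $L^\infty$; monotonicity of the determinant on positive Hermitian forms then reduces the full statement to the $\gamma=1$ case, which you cite as \cite[formula 3.4]{b-f}. This is more modular and makes the paper's phrase ``slight generalization of \cite[formula 3.4]{b-f}'' literal: the general $\gamma$ is the $\gamma=1$ estimate plus an explicit $L^\infty$ error. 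The cost is self-containedness: the Berndtsson-positivity and Bergman-kernel machinery that actually produces the constant $C_0$ and its $k^{-1}$ scaling is black-boxed into \cite{b-f}, whereas the paper re-derives that estimate from scratch so as to keep the $k$-dependence of the constant explicit.
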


\begin{proof}
This is shown in essentially the same way as in the proof of \cite[formula 3.4]{b-f},
but to pinpoint the exact dependence on the constant we recall the
argument. Let $\psi_{t}$ be a weak geodesic connecting $\phi$ (at
$t=1)$ with $\psi_{0}$ (at $t=0)$ \cite{ch00}. In particular,
this means that $t\mapsto\psi_{t}$ is a\emph{ psh path} (aka a subgeodesic)
in the following sense: extending $\psi_{t}$ to $X\times\left([0,1]\times i\R\right),$
so that $\psi_{t}$ is independent of the imaginary part of $t,$
the corresponding local function $(z,t)\mapsto\psi_{t}(z)$ is psh
locally on $X\times\left(]0,1[\times i\R\right).$ Moreover, it will
be convenient to use the following regularity properties \cite{ch00}:
$dd^{c}\psi_{t}\in L_{loc}^{\infty}$ for any fixed $t$ and $t\mapsto\psi_{t}$
is $C^{1}-$differentiable up to the boundary of $[0,1]$ (but, as
explained in \cite{b-f}, for the proof it is enough to use that $\psi_{t}$
is in $L_{loc}^{\infty}$ for any fixed $t$). Now,

\begin{equation}
\text{(i)}\,t\mapsto\mathcal{E}(\psi_{t})\,\text{is affine,\,\,\,\,\text{(ii)\,}t\ensuremath{\mapsto\mathcal{L}_{k}\left(\psi_{t}^{(\epsilon)}\right)}}\,\text{is\,concave}\label{eq:i and ii}
\end{equation}
 if $\epsilon$ is sufficiently small. In fact, the first statement
characterizes the geodesic $\phi_{t}$ among all psh paths $\phi_{t}$
as above \cite[Thm 1.7]{bbj} and the second one follows from \cite{bern0},
only using that $\psi_{t}^{(\epsilon)}$ is a psh path. To see this
rewrite $-kK_{X}=(k+1)L+K_{X}$ for $L=-K_{X}.$ Then, locally, rewriting
$e^{-k\phi}e^{-\left(\gamma\phi+(1-\gamma)\phi_{0}\right)}=e^{-\left(k\phi+\gamma\phi+(1-\gamma)\phi_{0}\right)}$
the Hermitian metric $H^{(k)}\left(\phi,e^{-\left(\gamma\phi+(1-\gamma)\phi_{0}\right)}\right)$
coincides with the $L^{2}-$metric on $H^{0}(X,(k+1)L+K_{X})$ induced
by the metric $k\phi+\gamma\phi+(1-\gamma)\phi_{0}$ on $(k+1)+L_{X}.$
Accordingly, $\mathcal{L}_{k}\left(\phi\right)$ may be identified
with the $L^{2}-$metric on the determinant line of $H^{0}(X,(k+1)L+K_{X}).$
Now replace $\phi$ with $\psi_{t}^{(\epsilon)}$ and decompose the
corresponding metric on $(k+1)L+K_{X}$ as 

\begin{equation}
k\psi_{t}^{(\epsilon)}+\gamma\psi_{t}^{(\epsilon)}+(1-\gamma)\phi_{0}=(k+\gamma)(1-\epsilon)\psi_{t}+\left((k+\gamma)\epsilon\psi_{0}+(1-\gamma)\phi_{0}\right).\label{eq:decomp of metrics}
\end{equation}
 The second term above has non-negative curvature on $X$ if $\epsilon$
is sufficiently large. For simplicity, we will first consider the
special case that $\phi_{0}=\psi_{0}.$ Then the non-negativity in
question holds if $\epsilon\geq\frac{\gamma-1}{k+\gamma}.$ Henceforth
will assumed that $\epsilon=\frac{\gamma-1}{k+\gamma}$ (then $(1-\epsilon)=(k+1)(/k+\gamma)>0).$
Since $\psi_{t}$ is locally psh on $X\times\left([0,1]\times i\R\right)$
the whole expression in formula \ref{eq:decomp of metrics} is thus
locally psh. Hence, the convexity of $t\mapsto\mathcal{L}_{k}\left(\psi_{t}^{(\epsilon)}\right)$
follows from the positivity of direct image bundles in \cite{bern0},
applied the to trivial fibration $X\times\left(]0,1[\times i\R\right)\rightarrow]0,1[\times i\R.$
This concludes the proof of the properties in formula \ref{eq:i and ii}.
As a consequence, the function $t\mapsto-\frac{1}{(1-\epsilon)}\mathcal{L}_{k}\left(\psi_{t}^{(\epsilon)}\right)+\mathcal{E}(\psi_{t})$
is concave, giving, 
\[
-\frac{1}{(1-\epsilon)}\mathcal{L}_{k}\left(\phi^{(\epsilon)}\right)+\mathcal{E}(\phi)\leq-\mathcal{L}_{k}\left(\psi_{0}^{(\epsilon)}\right)+\mathcal{E}(\psi_{0})+\int_{X}\left(-\frac{d\psi_{t}}{dt}|_{t=0}\right)\left(\frac{1}{(1-\epsilon)}d\mathcal{L}_{k}\left(\phi^{(\epsilon)}\right)-d\mathcal{E}\right)_{|\psi_{0}}
\]
Now assume first that $\phi$ is sup-normalized, i.e. that $\sup_{X}(\phi-\psi_{0})=0.$
Then it follows from the convexity of $t\mapsto\phi_{t}$ that $\frac{d\psi_{t}}{dt}|_{t=0}\leq0.$
Next, since we are considering the special case $\phi_{0}=\psi_{0}$
and $\psi_{0}^{(\epsilon)}=\psi_{0}$ the term $\mathcal{L}_{k}\left(\psi_{0}^{(\epsilon)}\right)$
vanishes and so does $\mathcal{E}(\psi_{0})$ (by definition). Moreover,
since the differential of the functional 
\[
\phi\mapsto\frac{1}{(1-\epsilon)}\mathcal{L}_{k}\left(\phi^{(\epsilon)}\right)
\]
 is given by the Bergman measure $B_{k}$ associated to the Hermitian
metric $H^{(k)}(\psi_{0},e^{-\psi_{0}})$ (by formula \ref{eq:diff of L})
it follows from Bergman kernel asymptotics \cite{ti0} that there
exists a constant $C_{0}$ (depending only on $\psi_{0})$ such that
\begin{equation}
\left(\frac{1}{(1-\epsilon)}d\mathcal{L}_{k}\left(\phi^{(\epsilon)}\right)-d\mathcal{E}\right)_{|\psi_{0}}\leq-C_{0}k^{-1}d\mathcal{E}(\psi_{0}).\label{eq:upper bound on Bergman kernel}
\end{equation}
(in fact only an \emph{upper }bound on $B_{k}$ is needed for which
there is an elementary proof \cite[Prop 2.4]{b-f}). Hence,
\[
-\frac{1}{(1-\epsilon)}\mathcal{L}_{k}\left(\phi^{(\epsilon)}\right)+\mathcal{E}(\phi)\leq C_{0}k^{-1}\int\left(-\frac{d\psi_{t}}{dt}|_{t=0}\right)\left(d\mathcal{E}\right)_{|\psi_{0}}=-C_{0}k^{-1}\mathcal{E}(\phi),
\]
 using, in the last equality $(i)$ in formula \ref{eq:i and ii}.
Replacing a general $\phi\in\mathcal{H}$ with its sup-normalized
version $\phi-\sup_{X}(\phi-\psi_{0})$ we deduce that
\[
-\frac{1}{(1-\epsilon)}\mathcal{L}_{k}\left(\phi^{(\epsilon)}\right)+\mathcal{E}(\phi)\leq C_{0}k^{-1}\left(-\mathcal{E}(\phi)+\sup_{X}(\phi-\psi_{0})\right).
\]
This concludes the proof when $\phi_{0}=\psi_{0}.$ Finally, to handle
the case of a general case note that replacing $\phi_{0}$ with $\psi_{0}$
in the definition of $\mathcal{L}_{k}\left(\phi^{(\epsilon)}\right)$
just gives rise to an extra term which, after multiplication by $\frac{1}{(1-\epsilon)},$
may estimated from above by
\[
\frac{1}{(1-\epsilon)}\frac{1}{k+\gamma}\log e^{(\gamma-1)\sup_{X}(\phi_{0}-\psi_{0})}\leq\sup_{X}\left|\phi_{0}-\psi_{0}\right|\frac{1}{(1-\epsilon)(k+\gamma)}\left|\gamma-1\right|=\sup_{X}\left|\phi_{0}-\psi_{0}\right|\frac{1}{k+1}\left|\gamma-1\right|.
\]
\end{proof}

\subsection{\label{subsec:Conclusion-of-the}Conclusion of the proof of Theorem
\ref{thm:Main intro}}

By Prop \ref{prop:key inequal for part} the following inequality
holds for any metric $\phi$ on $-K_{X}$ and number satisfying $(1-\epsilon)\geq0:$
\[
-\frac{1}{\gamma N}\log\mathcal{Z}_{N}(-\gamma)\leq\left(1+\gamma k^{-1}\right)(1-\epsilon)\left(\frac{1}{N(k+\gamma)(1-\epsilon)}\mathcal{L}_{k}\left(\phi\right)-\frac{1}{\gamma(1-\epsilon)}\log\int_{X}e^{-\left(\gamma\phi+(1-\gamma)\phi_{0}\right)}\right)
\]
Taking $\epsilon=\frac{\gamma-1}{k+\gamma}$ and replacing $\phi$
with $\phi^{(\epsilon)}$ (defined as in the previous lemma) and setting
$\gamma^{(\epsilon)}:=(1-\epsilon)\gamma$ thus yields (using that
$\left(1+\gamma k^{-1}\right)(1-\epsilon)=1+k^{-1})$)
\[
-\frac{1}{\gamma N}\log\mathcal{Z}_{N}(-\gamma)\leq(1+k^{-1})\left(\frac{1}{N(k+\gamma)(1-\epsilon)}\log\det H^{(k)}(\phi^{(\epsilon)},\gamma)-\frac{1}{\gamma^{(\epsilon)}}\log\int_{X}e^{-\left(\gamma^{(\epsilon)}\phi+(1-\gamma^{(\epsilon)})\phi_{0}\right)}\right)
\]
Next, in order to fix ideas, we first consider the special case when
$\phi_{0}=\psi_{0}.$ Then, by the previous lemma, the right hand
side above is bounded from above by
\begin{equation}
(1+k^{-1})\left(\mathcal{D}_{-\gamma^{(\epsilon)}}(\phi)+C_{0}k^{-1}\left(-\mathcal{E}(\phi)+\sup_{X}(\phi-\psi_{0})\right)\right).\label{eq:pf main theorem a}
\end{equation}
Since (trivially) $\mathcal{D}_{-\gamma^{(\epsilon)}}(\phi)\leq-\mathcal{E}(\phi)+\sup_{X}(\phi-\psi_{0})$
it follows that 
\[
-\frac{1}{\gamma N}\log\mathcal{Z}_{N}(-\gamma)\leq\mathcal{D}_{-\gamma^{(\epsilon)}}(\phi)+(C_{0}+1)k^{-1}\left(-\mathcal{E}(\phi)+\sup_{X}(\phi-\psi_{0})\right)
\]
Invoking the inequality \ref{eq:bounding J type by E} thus reveals
that there exists a constant $C$ only depending on $\psi_{0}$ such
that 
\begin{equation}
-\frac{1}{N}\log\mathcal{Z}_{N}(-\gamma)\leq\gamma\mathcal{D}_{-\gamma^{(\epsilon)}}(\phi)+C\gamma k^{-1}E\left(MA(\phi)\right)+C\gamma k^{-1}\label{eq:pf theorem main b}
\end{equation}
Next, we rewrite the first two terms in the right hand side above
as 
\[
\gamma\mathcal{D}_{-\gamma^{(\epsilon)}}(\phi)+C\gamma k^{-1}E\left(MA(\phi)\right)=(1-\epsilon)^{-1}\left(\gamma^{(\epsilon)}\mathcal{D}_{-\gamma^{(\epsilon)}}(\phi)+C\gamma^{(\epsilon)}k^{-1}E\left(MA(\phi)\right)\right),
\]
 where the second factor above is is bounded from above by $\mathcal{M}_{-\gamma^{(\epsilon)}(1-Ck^{-1})},$
as follows from the inequality \ref{eq:Ding smaller than Mab} and
the free energy formula \ref{eq:M as free energy}. Hence, 
\[
-\frac{1}{N}\log\mathcal{Z}_{N}(-\gamma)\leq(1-\epsilon)^{-1}\mathcal{M}_{-\gamma^{(\epsilon)}(1-Ck^{-1})}+C\gamma k^{-1},
\]
 proving the theorem in the case when $\phi_{0}=\psi_{0}.$ The general
case is shown in essentially the same way, by first including the
error term involving $\phi_{0}$ from Lemma \ref{lem:bound on L funct}
in formula \ref{eq:pf main theorem a} and then, in formula \ref{eq:pf theorem main b},
estimating 
\[
E\left(MA(\phi)\right)\leq\left(E\left(MA(\phi)\right)+\int(\phi_{0}-\psi_{0})MA(\phi)\right)+\left\Vert \phi_{0}-\psi_{0}\right\Vert _{L^{\infty}(X)},
\]
so that the first term in the right hand side above can be absorbed
into the twisted Mabuchi functional, as before.

\subsection{\label{subsec:The-case gamma smaller}The case $\gamma\protect\leq1$}

For $\gamma\leq1$ the estimate in Theorem \ref{thm:Main intro} implies
that (after perhaps increasing the constant $C):$
\[
-\frac{1}{N}\log\mathcal{Z}_{N}(-\gamma)\leq\inf_{\mathcal{H}}\mathcal{M}_{-\gamma(1-Ck^{-1})}+Ck^{-1}+k^{-1}\gamma\left(|1-\gamma|+C\right)\log\left\Vert \frac{dV}{dV_{X}}\right\Vert _{L^{\infty}(X)}.
\]
 Indeed, by a simple scaling argument (applied to Lemma \ref{lem:bound on L funct})
it it enough to consider the case when $e^{-\left(\gamma\psi_{0}+(1-\gamma)\phi_{0}\right)}$
is a probability measure. This implies that the entropy term in the
free energy $F_{\gamma}$ is non-negative. It then follows readily
from the definition that the function 
\[
T\mapsto\inf TF_{T^{-1}}
\]
 is increasing in $T$ (where the infimum is taken over a given subset
of $\mathcal{P}(X)).$ In particular, applied to the present setup
at $T_{0}=(k+\gamma)/(k+1)$ and $T_{1}=1$ this monotonicity yields
(since $T_{0}\leq T_{1}$ when $\gamma\leq1)$ 
\[
\frac{k+\gamma}{k+1}\inf_{\mathcal{H}}\mathcal{M}_{-\gamma(1-Ck^{-1})(k+1)/(k+\gamma)}\leq\inf_{\mathcal{H}}\mathcal{M}_{-\gamma(1-Ck^{-1})},
\]
 as desired. 

\section{Comparison with the quantization approach}

Given a holomorphic line bundle $L$ over a compact complex manifold
$X$ and a positive integer $k$ denote by $\mathcal{H}_{k}(L)$ the
space of all Hermitian metrics on the $N-$complex vector space $H^{0}(X,kL).$
The ``Fubini-Study map'' $FS$ maps $\mathcal{H}_{k}(L)$ into the
space $\mathcal{H}(L)$ of all metrics on $L$ with positive curvature:
\begin{equation}
FS:\mathcal{H}_{k}(L)\rightarrow\mathcal{H}(L),\,\,\,\,FS(H):=k^{-1}\log\left(\frac{1}{N}\sum_{i=1}^{N}\left|s_{i}^{H}\right|^{2}\right)\label{eq:def of fs}
\end{equation}
where $(s_{i}^{H})$ is any basis in $H^{0}(X,kL)$ which is orthonormal
wrt $H.$ The normalization by $N$ used here is non-standard, but
it will simplify some of the formulas below. 

Henceforth, we shall specialize to the anti-canonical setting in Section
\ref{subsec:The-anti-canonical-setup}. Thus $X$ is a Fano manifold
and $L=-K_{X}.$ We will abbreviate $\mathcal{H}_{k}(-kK_{X})=\mathcal{H}_{k}$
and $\mathcal{H}(-K_{X})=\mathcal{H}.$ Consider now the functional
$D_{k,-\gamma}$ on $\mathcal{H}_{k}$ defined by 
\begin{equation}
D_{k,-\gamma}(H_{k}):=\frac{1}{kN_{k}}\log\det H^{(k)}-\frac{1}{\gamma}\log\int_{X}e^{-\left(\gamma FS(H_{k})+(1-\gamma)\phi_{0}\right)},\label{eq:def of D k gamma on H k}
\end{equation}
 which is invariant under scaling by positive numbers: 
\[
D_{k,-\gamma}(e^{c}H_{k})=D_{k,-\gamma}(H_{k})\,\,\,\forall c\in\R.
\]
As is well-known the functional $D_{k,-\gamma}$ on $\mathcal{H}_{k}$
can be viewed as a quantization of the functional $\mathcal{D}_{-\gamma}$
on $\mathcal{H}$ \cite{bbgz,rtz}. The following proposition relates
the functional $D_{k,-\gamma}$ on $\mathcal{H}_{k}$ to the functional
$\mathcal{D}_{k,-\gamma}$ on $\mathcal{H}$ defined in formula \ref{eq:def of D k gamma on H}.
\begin{prop}
\label{prop:inequal for quantized Ding}For any metric $\phi$ on
$-K_{X}$
\[
D_{k,-\gamma}\left(H^{(k)}\left(\phi,e^{-\left(\gamma\phi+(1-\gamma)\phi_{0}\right)}\right)\right)\leq\left(1+\gamma k^{-1}\right)\mathcal{D}_{k,-\gamma}(\phi)
\]
and for any $H\in\mathcal{H}_{k}$
\[
\left(1+\gamma k^{-1}\right)\mathcal{D}_{k,-\gamma}\left(FS(H)\right)\leq D_{k,-\gamma}(H)
\]
 In particular, 
\[
\inf_{\mathcal{H}_{k}}D_{k,-\gamma}=\left(1+\gamma k^{-1}\right)\inf_{\mathcal{H}}\mathcal{D}_{k,-\gamma}
\]
\end{prop}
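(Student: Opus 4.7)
My plan is to reduce each of the two stated inequalities to a one-line application of a classical estimate — Jensen's inequality for the first, the AM-GM inequality for the second — after first matching the $\log\det$ contributions on the two sides. The bookkeeping observation is that the normalization built into $\mathcal{L}_k$ gives, for any $\phi\in\mathcal{H}$ and $\nu:=e^{-\left(\gamma\phi+(1-\gamma)\phi_{0}\right)}$, the identity $(1+\gamma k^{-1})\mathcal{L}_k(\phi) = -\frac{1}{kN}\log\det H^{(k)}(\phi,\nu)$, so that
\[
(1+\gamma k^{-1})\mathcal{D}_{k,-\gamma}(\phi) = \frac{1}{kN}\log\det H^{(k)}(\phi,\nu) - \frac{1+\gamma k^{-1}}{\gamma}\log\int_X \nu,
\]
whereas $D_{k,-\gamma}(H) = \frac{1}{kN}\log\det H - \frac{1}{\gamma}\log\int_X e^{-\left(\gamma FS(H)+(1-\gamma)\phi_{0}\right)}$. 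Modulo this identification each of the two inequalities becomes a clean scalar comparison.

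For the first inequality I would take $H:=H^{(k)}(\phi,\nu)$; the $\log\det$ terms then cancel and it suffices to show that $\int_X e^{-\left(\gamma FS(H)+(1-\gamma)\phi_{0}\right)} \geq \bigl(\int_X\nu\bigr)^{1+\gamma/k}$. The Bergman-kernel identity $FS(H)=\phi+k^{-1}\log\rho_{k\phi}$ — immediate from the definitions of $FS$ and $\rho_{k\phi}$ — rewrites the left-hand side as $\int_X \rho_{k\phi}^{-\gamma/k}\nu$, and since $\rho_{k\phi}\nu=B_{k\phi}$ is a probability measure by formula \ref{eq:diff of L} we have $\int_X\rho_{k\phi}\nu=1$. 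Applying Jensen to the convex function $t\mapsto t^{-\gamma/k}$, which is convex on $(0,\infty)$ because $\gamma>0$, against the probability measure $\nu/\int_X\nu$ then yields exactly this bound.

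For the second inequality I would set $\phi:=FS(H)$ and $H_{FS}:=H^{(k)}(\phi,\nu)$; the claim reduces to $\det H_{FS}/\det H \leq \bigl(\int_X\nu\bigr)^N$. In an $H$-orthonormal basis $(s_i^H)$, the relation $N^{-1}\sum_i|s_i^H|^2 = e^{k\phi}$ — which is the very definition of $FS(H)=\phi$ — identifies the Gram matrix $M$ of $H_{FS}$ in this basis as $\int_X K\,d\nu$, where at each point $K(x)=v(x)v(x)^*/\rho_H(x)$ with $v=(s_i^H)$ is a rank-one positive semidefinite matrix of trace exactly $N$. Hence $M$ is PSD with $\text{tr}(M)=N\int_X\nu$, and the AM-GM inequality on its eigenvalues gives $\det M \leq \bigl(\text{tr}(M)/N\bigr)^N = \bigl(\int_X\nu\bigr)^N$, as required.

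The equality of the two infima in the last line then follows immediately: the first inequality, ranging over $\phi\in\mathcal{H}$, yields $\inf_{\mathcal{H}_k}D_{k,-\gamma}\leq(1+\gamma k^{-1})\inf_\mathcal{H}\mathcal{D}_{k,-\gamma}$, while the second, ranging over $H\in\mathcal{H}_k$ and using $FS(\mathcal{H}_k)\subset\mathcal{H}$, gives the reverse. I expect the main obstacle to be not analytic but purely the bookkeeping in the opening paragraph: aligning the various coefficients $(1+\gamma k^{-1})$, $1/\gamma$, and $1/(kN)$ so that the $\log\det$ contributions cancel exactly. Once that is in place, each of the two substantive steps is literally one line of Jensen or AM-GM applied to a very concrete object — the Bergman density for the first, the rank-one Fubini-Study coherent-state projector for the second.
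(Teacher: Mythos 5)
Your proof is correct and follows essentially the same route as the paper's: the bookkeeping identity $(1+\gamma k^{-1})\mathcal{D}_{k,-\gamma}(\phi)=\frac{1}{kN}\log\det H^{(k)}(\phi,\nu)-\frac{1+\gamma k^{-1}}{\gamma}\log\int_X\nu$ reduces the first inequality to the Jensen/H\"older bound $\int\rho_{k\phi}^{-\gamma/k}\nu\geq(\int\nu)^{1+\gamma/k}$ using $\int\rho_{k\phi}\nu=1$, and the second to $\det H^{(k)}(FS(H),\nu)\leq\det H\cdot(\int\nu)^{N}$ by factoring through the Gram matrix in an $H$-orthonormal basis and applying AM--GM, exactly as in the paper's proof of \ref{eq:second ineq in pf inf is inf}. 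Your trace/rank-one-projector phrasing of the AM--GM step and your avoidance of the explicit normalization $\int\nu=1$ at the end are minor stylistic streamlinings, not a different argument.
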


\begin{proof}
To prove the first inequality let $\phi$ be a given metric on $-K_{X}$
and set $\psi_{k}:=FS\left(H^{(k)}\left(\phi,e^{-\left(\gamma\phi+(1-\gamma)\phi_{0}\right)}\right)\right).$
Then

\begin{equation}
\int_{X}e^{-\left(\gamma\psi_{k}+(1-\gamma)\phi_{0}\right)}\geq\left(\int_{X}e^{-\left(\gamma\phi+(1-\gamma)\phi_{0}\right)}\right)^{(1+\gamma/k)}.\label{eq:first ineq in pf inf is inf}
\end{equation}
 Indeed, rewriting $e^{-\left(\gamma\psi_{k}+(1-\gamma)\phi_{0}\right)}=e^{-\gamma\left(\psi_{k}-\phi\right)}e^{-\left(\gamma\phi+(1-\gamma)\phi_{0}\right)}$
and using that 
\[
e^{\left(\psi_{k}-\phi\right)}=\rho_{k\phi}
\]
(as follows directly from the definition of $\rho_{k\phi}$ in formula
\ref{eq:diff of L}) gives 
\[
\int_{X}e^{-\left(\gamma\psi_{k}+(1-\gamma)\phi_{0}\right)}=\int_{X}(\rho_{k\phi})^{-\gamma/k}e^{-\left(\gamma\phi+(1-\gamma)\phi_{0}\right)}\geq
\]
\[
\left(\int_{X}(\rho_{k\phi})e^{-\left(\gamma\phi+(1-\gamma)\phi_{0}\right)}\right)^{-\gamma/k}\left(\int_{X}e^{-\left(\gamma\phi+(1-\gamma)\phi_{0}\right)}\right)^{(1+\gamma/k)}
\]
 using Hölder's inequality with negative exponent $-\gamma/k$ (or
Jensen's inequality applied to the convex function $t\mapsto t^{-\gamma/k}$
on $]-\infty,\infty[$). The integral appearing in the first factor
in the right hand side above is precisely the integral of the Bergman
measure $B_{k\phi}$ (defined in formula \ref{eq:diff of L}) and
thus equal to one, which proves the inequality \ref{eq:first ineq in pf inf is inf}.
Hence, using $(1+\gamma k^{-1})/(k+\gamma)=1/k,$ 
\[
(1+\gamma/k)\mathcal{D}_{k,-\gamma}\left(H^{(k)}\left(\phi,e^{-\left(\gamma\phi+(1-\gamma)\phi_{0}\right)}\right)\right)\leq
\]
\[
\frac{1}{kN}\log\det H^{(k)}\left(\phi,e^{-\left(\gamma\phi+(1-\gamma)\phi_{0}\right)}\right)-\gamma^{-1}\log\left(\int_{X}e^{-\left(\gamma\phi+(1-\gamma)\phi_{0}\right)}\right),
\]
 which proves the first inequality stated in the proposition. To prove
the second one first observe that for any $H$ and and volume form
$\mu$ on $X$ 
\begin{equation}
\det\left(H^{(k)}\left(FS(H),\mu\right)\right)\leq\det H\cdot(\int_{X}\mu)^{N}\label{eq:second ineq in pf inf is inf}
\end{equation}
 Indeed, for any given $\phi$ in $\mathcal{H}$ and $H\in\mathcal{H}_{k},$
taking a basis $(s_{i}^{H})$ in $H^{0}(X,-kK_{X})$ which is orthonormal
wrt $H,$ we can factorize 
\begin{equation}
\det\left(H^{(k)}\left(\phi,\mu\right)\right)=\det H\cdot\det\left(H^{(k)}\left(\phi,\mu\right)(s_{i}^{H},s_{j}^{H})\right),\label{eq:factoriz}
\end{equation}
 where the second factor arises as the determinant of the change of
bases matrix between the reference basis $(s_{i}^{(k)})$ in $H^{0}(X,-kK_{X})$
and $(s_{i}^{H}).$ Next, by the arithmetic/geometric means inequality
\[
\left(\det\left(H^{(k)}\left(\phi,\mu\right)(s_{i}^{H},s_{j}^{H})\right)\right)^{1/N}\leq N^{-1}\sum_{i=1}^{N}H^{(k)}\left(\phi,\mu\right)(s_{i}^{H},s_{i}^{H}).
\]
 Now assume that $\phi=FS(H).$ Then 
\[
H^{(k)}\left(\phi,\mu\right)(s_{i}^{H},s_{i}^{H}):=\int_{X}\frac{|s_{i}^{H}|^{2}}{N^{-1}\sum_{j=1}^{N}|s_{i}^{H}|^{2}}\mu.
\]
 Hence, the second factor in the right hand side in formula \ref{eq:factoriz}
is bounded from above by the $N$th power of the integral of $\mu,$
proving the inequality \ref{eq:second ineq in pf inf is inf}. Thus,
if $H$ is a given element in $\mathcal{H}_{k}$ which is normalized
so that $\int e^{-\left(\gamma FS(H)+(1-\gamma)\phi_{0}\right)}=1,$
then applying the inequality \ref{eq:second ineq in pf inf is inf}
to $\mu=e^{-\left(\gamma FS(H)+(1-\gamma)\phi_{0}\right)}$ proves
the second inequality for any normalized $H$ in $\mathcal{H}_{k}.$
Finally, since both sides of the inequality in question are invariant
under scaling, $H\rightarrow e^{c}H,$ this concludes the proof for
a general $H$ in $\mathcal{H}_{k}.$
\end{proof}
\begin{rem}
The previous proposition refines a monotonicity result \cite[Lemma 2.6]{ber},
concerning Donaldson's iteration in the anti-canonical setting of
\cite{do1c}. Indeed, applying the first inequality to $\phi=FS(H)$
and then the second inequality reveals that $D_{k,-\gamma}(H)$ is
decreasing under Donaldson's map on $\mathcal{H}_{k},$ defined as
the composition of the maps $F\mapsto FS(H)$ and $\phi\mapsto H^{(k)}\left(\phi,e^{-\left(\gamma\phi+(1-\gamma)\phi_{0}\right)}\right).$
As in \cite{ber} one gets equality in the first equality in the proposition
when $\phi=FS(H)$ iff $H$ is a balanced metric in $\mathcal{H}_{k}$
in the anti-canonical sense of \cite{do1c,bbgz,rtz}. 
\end{rem}

Combining Prop \ref{prop:key inequal for part} and the equality for
the infima in Prop \ref{prop:inequal for quantized Ding} (only the
upper bound is needed) we thus arrive at the following result:
\begin{thm}
\label{thm:bound on Z in terms of D on H k}Let $X$ be a Fano manifold.
Given $(\phi_{0},\gamma)$ the following inequality holds for any
$k$:
\[
-\frac{1}{\gamma N}\log\mathcal{Z}_{N_{k}}(-\gamma)\leq\inf_{\mathcal{H}_{k}}D_{k,-\gamma}+\frac{1}{kN}\log N.
\]
\end{thm}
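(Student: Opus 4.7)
The plan is to simply chain together the two main inequalities already in place: the partition function bound from Proposition \ref{prop:key inequal for part}, which is phrased in terms of the downstairs functional $\mathcal{D}_{k,-\gamma}$ on $\mathcal{H}$, and the inf-inf comparison from Proposition \ref{prop:inequal for quantized Ding}, which lets us replace the downstairs functional by the upstairs quantized Ding functional $D_{k,-\gamma}$ on $\mathcal{H}_k$. No new analytic ingredient is required.

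Concretely, first I would invoke Proposition \ref{prop:key inequal for part} to record that
\[
-\frac{1}{\gamma N}\log\mathcal{Z}_{N_{k}}(-\gamma)\leq\left(1+\gamma k^{-1}\right)\inf_{\mathcal{H}}\mathcal{D}_{k,-\gamma}+\frac{1}{kN}\log N.
\]
Next, I would appeal to Proposition \ref{prop:inequal for quantized Ding}, specifically to the half of the inf-inf identity that bounds the left-hand side above in terms of the quantized infimum, namely
\[
\left(1+\gamma k^{-1}\right)\inf_{\mathcal{H}}\mathcal{D}_{k,-\gamma}\leq\inf_{\mathcal{H}_{k}}D_{k,-\gamma}.
\]
This inequality follows from the second assertion of Proposition \ref{prop:inequal for quantized Ding}, by taking any $H\in\mathcal{H}_{k}$, applying the pointwise bound at $\phi=FS(H)\in\mathcal{H}$ to see that $(1+\gamma k^{-1})\mathcal{D}_{k,-\gamma}(FS(H))\leq D_{k,-\gamma}(H)$, and then taking the infimum over $H$. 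Substituting this into the previous display yields the claimed inequality.

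The statement is therefore essentially a bookkeeping corollary of the two preceding propositions, and there is no real obstacle in the argument itself. The only mildly delicate point worth checking is that the direction of the inequality from Proposition \ref{prop:inequal for quantized Ding} lines up correctly with the direction we need here: we want an \emph{upper} bound on $(1+\gamma k^{-1})\inf_{\mathcal{H}}\mathcal{D}_{k,-\gamma}$, so the relevant half of that proposition is the pointwise inequality $(1+\gamma k^{-1})\mathcal{D}_{k,-\gamma}(FS(H))\leq D_{k,-\gamma}(H)$, valid for every $H\in\mathcal{H}_{k}$; the complementary pointwise inequality of Proposition \ref{prop:inequal for quantized Ding} is not used. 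This matches the parenthetical remark in the text that ``only the upper bound is needed.''
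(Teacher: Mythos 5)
Your proof is correct and essentially identical to the paper's one-line argument: both simply chain Proposition \ref{prop:key inequal for part} with the one-sided comparison $\left(1+\gamma k^{-1}\right)\inf_{\mathcal{H}}\mathcal{D}_{k,-\gamma}\leq\inf_{\mathcal{H}_{k}}D_{k,-\gamma}$ extracted from Proposition \ref{prop:inequal for quantized Ding}. You have also correctly pinned down that it is the second pointwise inequality there, $(1+\gamma k^{-1})\mathcal{D}_{k,-\gamma}(FS(H))\leq D_{k,-\gamma}(H)$, and not its companion, that supplies the needed direction.
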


As a consequence, if $\mathcal{Z}_{N_{k}}(-\gamma)$ is finite, then
the infimum of $D_{k,-\gamma}$ over $\mathcal{H}_{k}$ is finite.
In other words, the invariant $\gamma_{k}(X)$ defined by formula
\ref{eq:def of gamma k and gamma} is smaller than or equal to the
coercivity threshold of the functional $D_{k}$ on $\mathcal{H}_{k}$
which, by \cite{rtz}, coincides with the invariant $\delta_{k}(X)$
introduced in \cite{f-o} (appearing in formula \ref{eq:gamma k smaler then delta k}).
We thus arrive at a new proof of the following inequality first shown
in\cite{fu2} (see \ref{eq:gamma k smaler then delta k} for a reformulation
of the proof in terms of non-Archimedean pluripotential theory).
\begin{cor}
\cite[Thm 2.5]{f-o}. For a Fano manifold $X$ the following inequality
holds:
\[
\gamma_{k}(X)\leq\delta_{k}(X)
\]
\end{cor}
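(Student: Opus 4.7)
The plan is to derive the corollary almost immediately from Theorem \ref{thm:bound on Z in terms of D on H k}, combined with the identification of $\delta_{k}(X)$ with the coercivity threshold of the quantized Ding functional $D_{k}$ on $\mathcal{H}_{k}/\R$ established in \cite{rtz}.

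First I would pick any $\gamma$ with $0 < \gamma < \gamma_{k}(X)$. By monotonicity of the singularity exponent $-2\gamma/k$, the defining condition of $\gamma_{k}(X)$ in formula \eqref{eq:def of gamma k and gamma} guarantees that $\mathcal{Z}_{N_{k}}(-\gamma)$ is finite; it is also strictly positive, since the integrand is positive off a proper analytic subset of $X^{N_{k}}$. Hence $-\log \mathcal{Z}_{N_{k}}(-\gamma)$ is a finite real number. Rearranging the inequality supplied by Theorem \ref{thm:bound on Z in terms of D on H k} then gives
\[
\inf_{\mathcal{H}_{k}} D_{k,-\gamma} \;\geq\; -\frac{1}{\gamma N_{k}}\log \mathcal{Z}_{N_{k}}(-\gamma) \;-\; \frac{1}{kN_{k}}\log N_{k} \;>\; -\infty,
\]
so that the scale-invariant functional $D_{k,-\gamma}$ is bounded below on $\mathcal{H}_{k}$.

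Next I would invoke \cite{rtz}, which identifies the coercivity threshold of $D_{k}$ on $\mathcal{H}_{k}/\R\cong SL(N,\C)/SU(N)$ with $\delta_{k}(X)$. Since $D_{k,-\gamma}$ is scaling-invariant, the previous bound descends to a lower bound on the finite-dimensional symmetric space $\mathcal{H}_{k}/\R$. On that space $D_{k,-\gamma}$ is geodesically convex with asymptotic slopes computed by the log canonical thresholds of $k$-basis type anti-canonical divisors, so that boundedness below at the parameter $\gamma$ forces $\gamma \leq \delta_{k}(X)$. Letting $\gamma \nearrow \gamma_{k}(X)$ then yields the desired inequality $\gamma_{k}(X) \leq \delta_{k}(X)$.

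The only step that is not entirely mechanical is the passage ``bounded below $\Longrightarrow$ $\gamma \leq \delta_{k}(X)$'' on $\mathcal{H}_{k}/\R$. This is, however, not an additional obstacle: on the finite-dimensional symmetric space a geodesically convex scale-invariant functional is bounded below iff its asymptotic slopes along all geodesic rays are non-negative, and \cite{rtz} shows precisely that these slopes are the quantities defining $\delta_{k}(X)$. Thus the whole argument reduces, modulo this bookkeeping taken from \cite{rtz}, to the genuinely new input, namely the partition-function inequality of Theorem \ref{thm:bound on Z in terms of D on H k}.
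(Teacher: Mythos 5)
Your argument is correct and is essentially the paper's own: both use Theorem \ref{thm:bound on Z in terms of D on H k} to conclude that finiteness of $\mathcal{Z}_{N_k}(-\gamma)$ forces $\inf_{\mathcal{H}_k}D_{k,-\gamma}>-\infty$, and then cite \cite{rtz} to identify the coercivity threshold of $D_k$ on $\mathcal{H}_k$ with $\delta_k(X)$. The paper states this more tersely (omitting the explicit $\gamma\nearrow\gamma_k(X)$ limit and the discussion of asymptotic slopes, which it delegates entirely to \cite{rtz}), but the route is the same.
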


Combining the equality for the infima in Prop \ref{prop:inequal for quantized Ding}
with the argument employed in Section \ref{subsec:Conclusion-of-the}
also yields the following analog of Theorem \ref{thm:Main intro}:
\begin{thm}
There exists a constant $C>0$ (depending only on the reference volume
for $dV_{X})$ such that for any $\gamma>0$ and positive integer
$k$ 
\[
\inf_{\mathcal{H}_{k}}D_{k,-\gamma}\leq-\frac{1}{N}\log\mathcal{Z}_{N}(-\gamma)\leq\frac{k+\gamma}{k+1}\inf_{\mathcal{H}}\mathcal{M}_{-\gamma c_{k}}+k^{-1}\gamma\left(C+\left(|1-\gamma|+C\right)\log\left\Vert \frac{dV}{dV_{X}}\right\Vert _{L^{\infty}(X)}\right),
\]
 where $c_{k}:=(1-Ck^{-1})(k+1)/(k+\gamma).$
\end{thm}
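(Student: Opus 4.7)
My plan is to establish this two-sided estimate by chaining together three already-proven ingredients: the key partition-function bound of Proposition \ref{prop:key inequal for part}, the equality of infima in Proposition \ref{prop:inequal for quantized Ding}, and the Mabuchi estimate carried out in Section \ref{subsec:Conclusion-of-the}. In other words, the statement is essentially the assembly of Theorem \ref{thm:Main intro} and Theorem \ref{thm:bound on Z in terms of D on H k}, made into a single inequality chain by inserting $\inf_{\mathcal{H}_{k}}D_{k,-\gamma}$ as an intermediate quantity.

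The right-hand inequality is exactly the assertion of Theorem \ref{thm:Main intro}, so for that half I would simply re-use its proof verbatim: start from Proposition \ref{prop:key inequal for part}, pass to the twisted metric $\phi^{(\epsilon)}$ with $\epsilon=(\gamma-1)/(k+\gamma)$, apply Lemma \ref{lem:bound on L funct} to absorb the $\mathcal{L}_{k}$ term into $-\mathcal{E}$ up to a $C_{0}k^{-1}$ error, convert the $(-\mathcal{E}+\sup)$ combination into the pluricomplex energy $E(MA(\phi))$ via inequality \ref{eq:bounding J type by E}, and conclude with the Ding--Mabuchi inequality \ref{eq:Ding smaller than Mab} rewritten through the free-energy identity \ref{eq:M as free energy}. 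The parameter shift $\gamma\mapsto\gamma c_{k}$ in the Mabuchi subscript arises from exactly the same bookkeeping as in Section \ref{subsec:Conclusion-of-the}.

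For the left-hand inequality, I would feed the identity $\inf_{\mathcal{H}_{k}}D_{k,-\gamma}=(1+\gamma k^{-1})\inf_{\mathcal{H}}\mathcal{D}_{k,-\gamma}$ of Proposition \ref{prop:inequal for quantized Ding} into the estimate
\[
-\frac{1}{\gamma N}\log\mathcal{Z}_{N}(-\gamma)\leq\left(1+\gamma k^{-1}\right)\inf_{\mathcal{H}}\mathcal{D}_{k,-\gamma}+\frac{1}{kN}\log N
\]
of Proposition \ref{prop:key inequal for part}. The two fit together in a single line to yield the comparison between $-\log\mathcal{Z}_{N}(-\gamma)/N$ and $\inf_{\mathcal{H}_{k}}D_{k,-\gamma}$ that is recorded in Theorem \ref{thm:bound on Z in terms of D on H k}; the residual $\log N/(kN)$ term is absorbed into the overall constant $C$ (using $N\sim k^{\dim X}$, so that this term is $O(k^{-1}\log k)$ in $N$).

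I do not expect any genuine obstacle: all the heavy lifting is already done. The Jensen/H\"older manipulation that produces the partition-function bound sits inside Proposition \ref{prop:key inequal for part}; the positivity-of-direct-images input that turns $\mathcal{L}_{k}$ into an affine-plus-concave functional along geodesics is buried in Lemma \ref{lem:bound on L funct}; and the Fubini--Study pull-back / arithmetic-geometric means argument needed for the equality of infima is contained in Proposition \ref{prop:inequal for quantized Ding}. The only step demanding care is tracking the three rescaling factors $1+\gamma k^{-1}$, $(k+1)/(k+\gamma)$, and $c_{k}=(1-Ck^{-1})(k+1)/(k+\gamma)$ consistently through the chain, and this has already been managed explicitly in Section \ref{subsec:Conclusion-of-the}.
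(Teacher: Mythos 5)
Your right-hand inequality is handled correctly: it is indeed exactly Theorem \ref{thm:Main intro}, and re-running the argument of Section \ref{subsec:Conclusion-of-the} (Proposition \ref{prop:key inequal for part}, the twist by $\phi^{(\epsilon)}$, Lemma \ref{lem:bound on L funct}, the pluricomplex-energy bound \eqref{eq:bounding J type by E}, and the Ding--Mabuchi inequality \eqref{eq:Ding smaller than Mab} via the free energy identity \eqref{eq:M as free energy}) reproduces it verbatim. This matches what the paper does.

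The problem is the left-hand inequality, and here you have the direction reversed. What Proposition \ref{prop:key inequal for part} plus Proposition \ref{prop:inequal for quantized Ding} actually give (this is precisely Theorem \ref{thm:bound on Z in terms of D on H k}) is
\[
-\frac{1}{\gamma N}\log\mathcal{Z}_{N}(-\gamma)\leq\inf_{\mathcal{H}_{k}}D_{k,-\gamma}+\frac{1}{kN}\log N,
\]
which bounds the partition-function quantity \emph{from above} by the quantized Ding infimum. The displayed statement asserts the reverse: that $\inf_{\mathcal{H}_{k}}D_{k,-\gamma}$ bounds $-\tfrac{1}{N}\log\mathcal{Z}_{N}(-\gamma)$ \emph{from below}. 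Nothing in your chain of ingredients produces that, and in fact the inequality as literally stated fails in general: for $X=\P^{1}$ and $\gamma$ strictly between $\gamma_{k}(X)=1-\tfrac{1}{2k+1}$ and $\delta_{k}(X)=1$, one has $\mathcal{Z}_{N_{k}}(-\gamma)=\infty$ so $-\tfrac{1}{N}\log\mathcal{Z}_{N}(-\gamma)=-\infty$, while $D_{k,-\gamma}$ is still coercive and hence $\inf_{\mathcal{H}_{k}}D_{k,-\gamma}>-\infty$. So the paper's statement must have the first two terms transposed (and presumably a $\gamma$-normalization and error term to match Theorem \ref{thm:bound on Z in terms of D on H k}): the derivable chain is
\[
-\frac{1}{\gamma N}\log\mathcal{Z}_{N}(-\gamma)-\frac{1}{kN}\log N\;\leq\;\inf_{\mathcal{H}_{k}}D_{k,-\gamma}\;\leq\;\frac{k+\gamma}{k+1}\inf_{\mathcal{H}}\mathcal{M}_{-\gamma c_{k}}+\dots,
\]
where the right inequality is formula \eqref{eq:inf D k over H k smaller than Mab intro}. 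Your remark that the $\log N/(kN)$ term can be ``absorbed into the constant $C$'' also does not parse for the left inequality as printed, since $C$ sits on the opposite side of the chain from where that error term lives. You should flag the misprint rather than claim to derive the statement as written.
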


As explained in Section \ref{subsec:Comparison-with-the qu} this
inequality is closely related to results in \cite{zh}.

\end{document}